\newcommand\Z{\mathbb{Z}}
\newcommand\cP{\mathcal{P}}
\renewcommand\d{\partial}
\renewcommand\P{\mathbb{P}}
\theoremstyle{plain}
\newtheorem{theorem}{Theorem}[section]
\newtheorem{proposition}[theorem]{Proposition}
\theoremstyle{definition}
\theoremstyle{remark}
\newtheorem{remark}{Remark}[section]
\begin{document}

\title{Transcritical bifurcation for the conditional distribution of a diffusion process}
\author{Michel Bena\"im$^1$, Nicolas Champagnat$^{2}$, William O\c cafrain$^{2}$,  Denis Villemonais$^{2}$}
\footnotetext[1]{Institut de Math\'{e}matiques, Universit\'{e} de Neuch\^{a}tel, Switzerland.}
\footnotetext[2]{Université de Lorraine, CNRS, Inria, IECL, F-54000 Nancy, France.}

\maketitle

\begin{abstract}
    In this article, we describe a simple class of models of absorbed diffusion processes with parameter, whose conditional law exhibits a transcritical bifurcation. Our proofs are based on the description of the set of quasi-stationary distributions for general two-clusters reducible processes.
\end{abstract}

\section{Model, motivation and main result}

Let $D=(0,5)$ and consider Lipschitz functions $\varphi^1,\varphi^2,\psi^1,\psi^2:D\to[0,1]$ such that $\psi_1+\psi_2=1$ and (see Figure~\ref{fig:graphs})
\begin{align*}
    \begin{cases}
    \varphi^1_{\rvert (0,1]}\equiv 1,\ \varphi^1_{\rvert [1,2]}\leq 1,\ \varphi^1_{\rvert [2,5)}\equiv 0,\\
    \varphi^2_{\rvert (0,3]}\equiv 0,\ \varphi^2_{\rvert [3,4]}\leq 1,\ \varphi^2_{\rvert [4,5)}\equiv 1,\\
    \psi^1_{\rvert (0,2]}\equiv 1,\ 0<\psi^1_{\rvert  (2,3)}\leq 1,\ \psi^1_{\rvert [3,5)}\equiv 0,\\
    \psi^2_{\rvert (0,2]}\equiv 0,\ 0<\psi^2_{\rvert  (2,3)}\leq 1,\ \psi^2_{\rvert [3,5)}\equiv 1.
    \end{cases}
\end{align*}

\begin{figure}
{
	\def\svgwidth{\linewidth}
	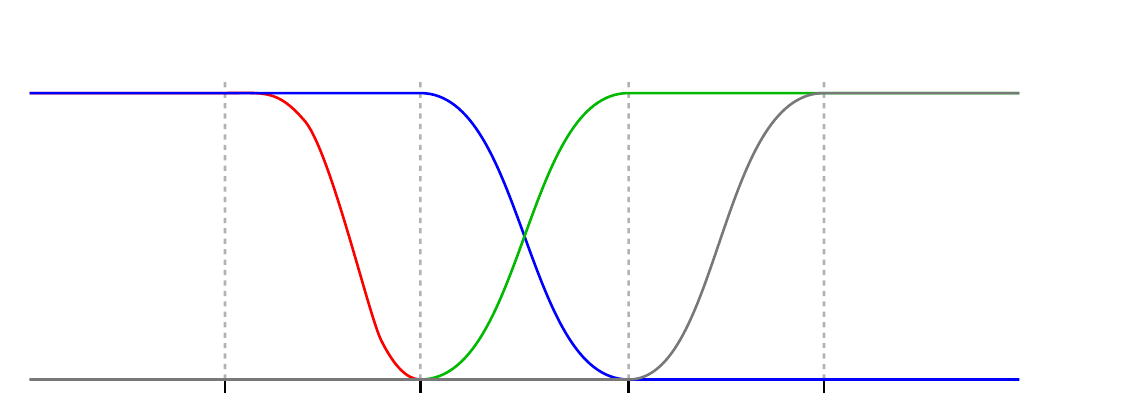}
    \caption{\label{fig:graphs} Representation of the graphs of $\varphi^1,\varphi^2,\psi^1$ and $\psi^2$.}
\end{figure}

For all $\alpha>0$, we consider the absorbed diffusion process $X^\alpha$ evolving according to the It\^o's stochastic differential equation
\begin{align}
\label{eq:EDS}
\mathrm dX^\alpha_t=\left(\varphi^1(X^\alpha_t)+\sqrt{\alpha}\,\varphi^2(X^\alpha_t)\right)\,\mathrm dB_t\,+(\psi^1(X_t)+\alpha\,\psi^2(X_t))\,\mathrm dt,
\end{align}
stopped when it reaches $\partial D=\{0,5\}$, and where $B$ is a standard one dimensional Brownian motion. This defines a sub-Markov semi-group $(P^\alpha_t)_{t\in\mathbb R_+}$ on $D$ by
\begin{align*}
\delta_x P^\alpha_t f=\mathbb E_x(f(X^\alpha_t) 1_{X^\alpha_t\in D}),\ \forall f\in L^\infty(D)
\end{align*}
and a semi-flow $(\Phi^\alpha_t)_{t\in\mathbb R_+}$ on the set $\mathcal P(D)$ of probability measures on $D$ by
\begin{align*}
\Phi^\alpha_t(\mu)=\mathbb P_\mu(X_t\in\cdot\mid X_t\in D).
\end{align*}
Observe that the diffusion coefficient in~\eqref{eq:EDS} vanishes on the interval [2,3]. Since in addition the drift
  coefficient is positive on $[2,3]$ for any $\alpha>0$, the set $[3,5]$ is absorbing and $\mathbb{P}_2(\forall
    t\geq 0, X_t\geq 2,\ \exists t\geq 0, X_t=3)=1$.

Hence, the family of diffusion processes $(X^\alpha)_{\alpha \geq 0}$ has some similarities with the family of
  discrete-time Markov chains $(X^{a,b})_{a,b \in (0,1)}$, where $X^{a,b}$ is defined on $\{1,2,\d\}$, absorbed at
  $\d$, with transition submatrix on $\{1,2\}$ given by
$$\begin{pmatrix}
a & 1-a \\
0 & b \end{pmatrix}.$$
It is pointed in \cite[Example 3.5]{BenaimCloezEtAl2016} that the probability measures $\nu_2 := \delta_2$ and $\nu := \frac{a-b}{1-b} \delta_1 + \frac{1-a}{1-b} \delta_2$ (when $a > b$) are such that 
\begin{itemize}
    \item If $a > b$, $\lim_{n \to \infty} \P_\mu(X^{a,b}_n = i | X^{a,b}_n \ne \d) = \nu(\{i\})$ for all $i=1,2$ and $\mu \ne \delta_2$.
    \item Otherwise, $\lim_{n \to \infty} \P_\mu(X^{a,b}_n = i | X^{a,b}_n \ne \d) = \nu_2(\{i\})$ for all $i=1,2$ and $\mu \in \cP(\{1,2\})$.
\end{itemize}

\begin{figure}
    {
        \def\svgwidth{\linewidth}
        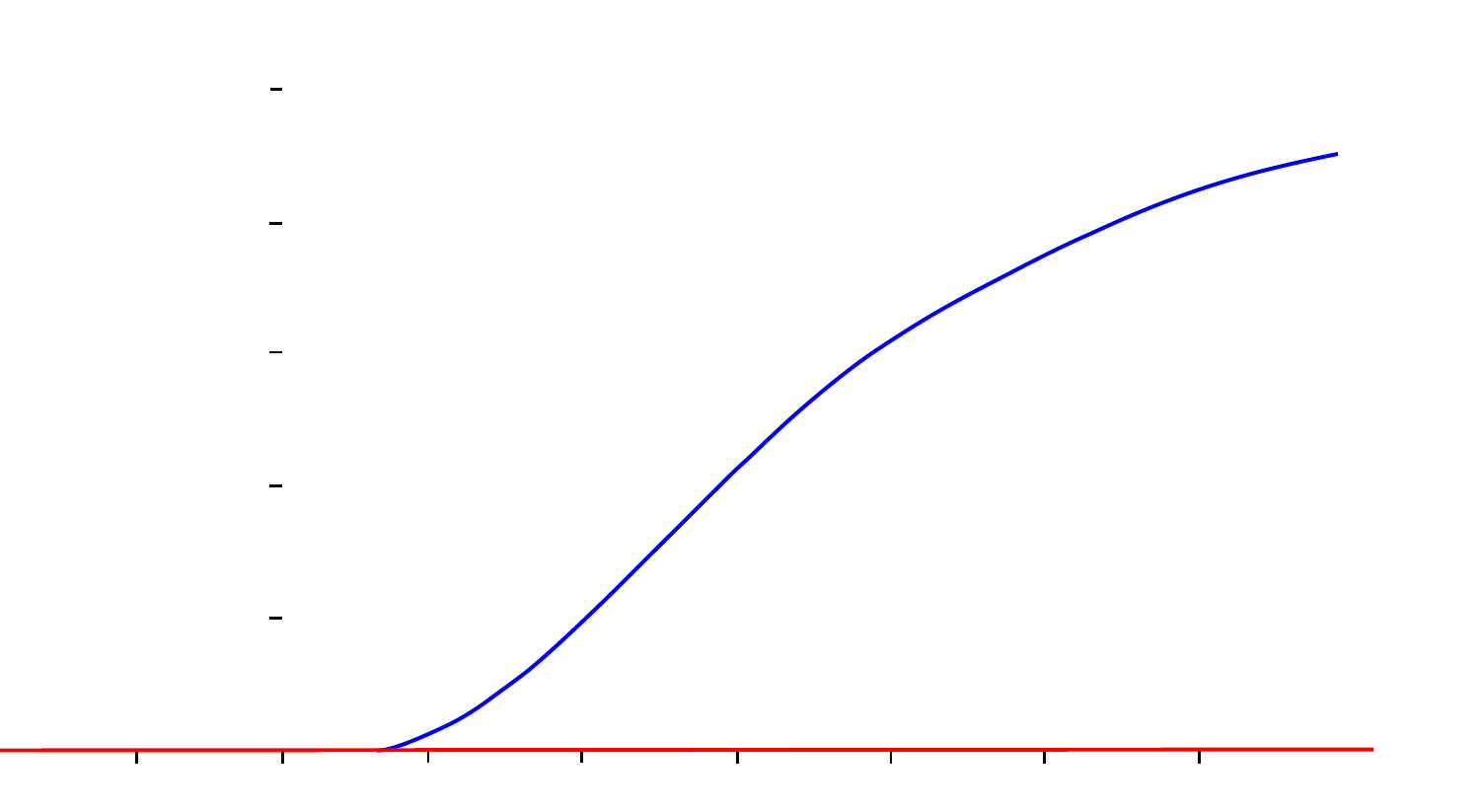}
    \caption{\label{fig:simulations} Domain of attraction of the attractors, with quadratic $\varphi^1,\varphi^2,\psi^1,\psi^2$. $\nu_\alpha((0,2))$ has been computed numerically using Fleming-Viot type approximation techniques (see for instance~\cite{DelMoralVillemonais2018}).}
\end{figure}

Our goal in this note is to prove a similar property for the family $(X^\alpha)_{\alpha \geq 0}$, which can be formulated as a
bifurcation of the dynamical system on $\mathcal{P}(D)$ generated by $\Phi^\alpha$. Our motivation is to quantify precisely the speed of convergence and
the basin of attraction of the fixed points of this dynamical systems (see Fig.~\ref{fig:simulations}). Another motivation is to provide
an illustration to the fact that, for an absorbed diffusion process satisfying the weak H\"ormander condition and regularity
properties at the absorbing set, uniqueness of a quasi-stationary distribution does not necessary hold true unless some accessibility
properties are satisfied (see Theorem~1.8 of~\cite{BenaimChampagnatEtAl2021}). The definition of a quasi-stationary distribution is
recalled in Section~\ref{sec:abstractQSD} below.

To state our main result, we define the absorption parameters of $X^\alpha$ for $\alpha=1$:
\begin{align*}
\lambda_1:=\inf\left\{\lambda\in\mathbb R,\ \liminf_{t\to+\infty} e^{\lambda t}\,\mathbb P_{x_1}(X^1_t\in(0,2))>0\right\},\ \text{ for some $x_1\in (0,2))$}.
\end{align*}
and
\begin{align*}
\lambda_2:=\inf\left\{\lambda\in\mathbb R,\ \liminf_{t\to+\infty} e^{\lambda t}\,\mathbb P_{x_2}(X^1_t\in[3,5))>0\right\},\ \text{ for some $x_2\in [3,5)$},
\end{align*}
As expected, we will see that the parameters $\lambda_1$ and $\lambda_2$ are positive and do not depend on $x_1$ nor 
$x_2$.

\begin{theorem}
	\label{thm:main} The dynamical system generated by $\Phi^\alpha$ parametrized by $\alpha>0$ admits a transcritical bifurcation at $\lambda_1/\lambda_2$. More precisely, there exist a family of probability measure $(\mu_\alpha)_{\alpha>\nicefrac{\lambda_1}{\lambda_2}}$ on $(0,5)$ and a probability measure $\mu_0$ on $[3,5)$ such that: 
	\begin{itemize}
        \item for $\alpha\leq \lambda_1/\lambda_2$, $\mu_0$ is a global attractor for $\Phi^\alpha$ for the total
            variation distance,
        \item for $\alpha>\lambda_1/\lambda_2$, $\mu_0$ is a saddle point whose stable manifold for the total
            variation distance is the set $W^s(\mu_0):=\{\mu\in \mathcal P((0,5)):\mu((0,2))=0\}$, and $\mu_\alpha$ is a stable point
          whose basin of attraction for the total variation distance is $W^u(\mu_0)=\{\mu\in \mathcal P((0,5)): \mu((0,2))>0\}$.
	\end{itemize}
\end{theorem}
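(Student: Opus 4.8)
The plan is to exploit the one-way ``two-cluster'' structure of $X^\alpha$ and to reduce Theorem~\ref{thm:main} to the abstract description of quasi-stationary distributions (QSDs) for two-cluster reducible processes announced in the abstract and developed in Section~\ref{sec:abstractQSD}. First I would make the decomposition explicit from the coefficient constraints: on $(0,2)$ one has $\varphi^2\equiv 0$ and $\psi^2\equiv 0$, so~\eqref{eq:EDS} reduces to $\varphi^1(X^\alpha)\,\mathrm dB+\psi^1(X^\alpha)\,\mathrm dt$; on $[3,5)$ one has $\varphi^1\equiv 0$, $\psi^1\equiv 0$, $\psi^2\equiv 1$, so it reduces to $\sqrt\alpha\,\varphi^2(X^\alpha)\,\mathrm dB+\alpha\,\mathrm dt$; and on $[2,3]$ the diffusion coefficient vanishes while the drift $\psi^1+\alpha\psi^2$ is strictly positive. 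Hence $[3,5)$ is invariant, a path started in $(0,2)$ can leave $(0,2)$ only by crossing $2$ and then sliding deterministically through $[2,3]$ into $[3,5)$, and no path ever returns to $(0,2)$. This is the continuous analogue of the upper-triangular chain of the introduction: $(0,2)$ plays the role of state $1$, $[3,5)$ that of state $2$, and $[2,3]$ encodes the one-way coupling $1\to 2$.

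Next I would compute the two intrinsic absorption rates and track their dependence on $\alpha$. Since the restriction of~\eqref{eq:EDS} to $(0,2)$ is independent of $\alpha$, the decay rate of $\mathbb P_{x_1}(X^\alpha_t\in(0,2))$ equals $\lambda_1$ for every $\alpha>0$; here a standard irreducibility/Harnack argument on $(0,2)$ together with the spectral theory of the killed generator yields $\lambda_1>0$, independence of $x_1$, and a unique QSD $\nu_1$ on $(0,2)$. On $[3,5)$ the coefficients are those of the $\alpha=1$ process accelerated by the factor $\alpha$: a deterministic time change by $\alpha$ combined with Brownian scaling shows that the restriction of $X^\alpha$ to $[3,5)$ has the law of $t\mapsto X^1_{\alpha t}$, so the corresponding decay rate is $\alpha\lambda_2$ and the associated QSD $\nu_2$ on $[3,5)$ does not depend on $\alpha$. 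In particular $\lambda_2>0$, the limit is $x_2$-independent, and the two effective rates $\lambda_1$ (constant) and $\alpha\lambda_2$ (increasing) cross precisely at $\alpha=\lambda_1/\lambda_2$.

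Then I would invoke the abstract two-cluster result. The QSD carried by the absorbing cluster is $\mu_0:=\nu_2$, which exists for all $\alpha$. When $\alpha>\lambda_1/\lambda_2$ the slow cluster is $(0,2)$, since $\lambda_1<\alpha\lambda_2$, and $-\lambda_1$ lies outside the cluster-$[3,5)$ spectrum; the abstract theorem then produces a second QSD $\mu_\alpha$ charging both clusters, whose restriction to $(0,2)$ is proportional to $\nu_1$ and whose restriction to $[3,5)$ solves the inhomogeneous eigenvalue equation at $-\lambda_1$ fed by the flux through $[2,3]$, exactly as the weights $\frac{a-b}{1-b},\frac{1-a}{1-b}$ arise in the discrete model. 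The convergence statements follow from conditional ergodicity on each cluster: for $\alpha>\lambda_1/\lambda_2$, any $\mu$ with $\mu((0,2))>0$ has its slow component dominate after conditioning, forcing $\Phi^\alpha_t(\mu)\to\mu_\alpha$ in total variation, whereas any $\mu$ with $\mu((0,2))=0$ lives on $[2,5)$, flows into $[3,5)$, and converges to $\mu_0$; this is the saddle with $W^s(\mu_0)=\{\mu((0,2))=0\}$ and $W^u(\mu_0)=\{\mu((0,2))>0\}$. For $\alpha\leq\lambda_1/\lambda_2$, $[3,5)$ is the (weakly) slower cluster, the cluster-$(0,2)$ contribution becomes asymptotically negligible after conditioning, and $\mu_0$ attracts every $\mu\in\mathcal P((0,5))$.

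The hard part will be this third step, and specifically the critical case $\alpha=\lambda_1/\lambda_2$, where $\lambda_1=\alpha\lambda_2$ and the two clusters decay at the same exponential rate, so the leading-order analysis no longer separates them. There one must control the sub-exponential corrections --- the polynomial prefactors coming from the Jordan-block structure of the combined generator, mirroring the $a=b$ degeneracy of the $2\times2$ matrix, where $\mu M^n$ has its mass at state $1$ of order $a^n$ but its mass at state $2$ of order $n\,a^{n-1}(1-a)$ --- to show that the mass initially in $(0,2)$ is asymptotically transferred to $[3,5)$ after conditioning, so that $\mu_0$ remains the global attractor exactly at threshold. Establishing the uniform-in-initial-condition exponential quasi-stationary convergence on each cluster (via the weak H\"ormander estimates of~\cite{BenaimChampagnatEtAl2021} together with standard conditional ergodic theory) and then propagating it through the one-way coupling to obtain the stated total-variation basins is the technical core of the argument.
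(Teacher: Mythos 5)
Your proposal is correct and follows essentially the same route as the paper: the same two-cluster decomposition with the deterministic transit through $[2,3]$, the same scaling argument giving the fixed rate $\lambda_1$ on $(0,2)$ and the rate $\alpha\lambda_2$ on $[3,5)$ via the identity in law with $t\mapsto X^1_{\alpha t}$, and the same reduction to the abstract two-cluster quasi-stationarity theorems of Section~\ref{sec:abstractQSD}, including the correct identification of the critical case $\alpha=\lambda_1/\lambda_2$ as requiring the polynomial (order $t$) prefactor. The only cosmetic difference is that the paper derives the cluster-wise exponential convergence from Section~4.5 of~\cite{ChampagnatVillemonais2017a} rather than from~\cite{BenaimChampagnatEtAl2021}.
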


    Our method also provides an estimate for the speed of convergence of the dynamical system generated by
    $\Phi^\alpha$ to its limit fixed point. One can check from the proof that it is exponential when $\alpha\neq \lambda_1/\lambda_2$
    and polynomial in $O(1/t)$ when $\alpha=\lambda_1/\lambda_2$.

 
 The proof of this result relies on the theory of quasi-stationary distributions. We show in particular that the process $X^\alpha$
 admits either one or two quasi-stationary distributions, depending on the value of $\alpha$, which correspond to $\mu_0$ and
 $\mu_\alpha$ (when $\alpha>\lambda_0/\lambda_1$). A central feature allowing this property is that $X^\alpha$ is reducible. Indeed,
 it is proved in~\cite{BenaimChampagnatEtAl2021} that irreducibility for such diffusions entails the uniqueness of a quasi-stationary
 distribution. In order to prove Theorem~\ref{thm:main}, we start with considerations on quasi-stationary distributions for Markov
 processes in reducible state spaces. Since they apply to general Markov processes and may have independent interest, they are stated
 independently in Section~\ref{sec:abstractQSD}. We conclude the proof of the theorem in Section~\ref{sec:proofthm}.

\section{Quasi-stationarity for two-clusters reducible processes}
\label{sec:abstractQSD}
Let $X$ be a Markov process with state space $M=D\cup \{\partial\}$ with $\partial\notin D$, in discrete or continuous time, such
that $M$ admits a measurable partition $D_1\cup D_2\cup\{\partial\}$. We assume that $\{\partial\}$ and $D_2\cup \{\partial\}$ are
absorbing sets (see Figure~\ref{fig:trangraph}), which means that
\begin{align*}
\mathbb P_\partial (X_t=\partial)=1\text{ and }
\mathbb P_x(X_t\in D_2\cup\{\partial\})=1,\ \forall x\in D_2,\ \forall t\geq 0.
\end{align*}

\begin{figure}[h]
	\center
	\includegraphics[width=6cm]{./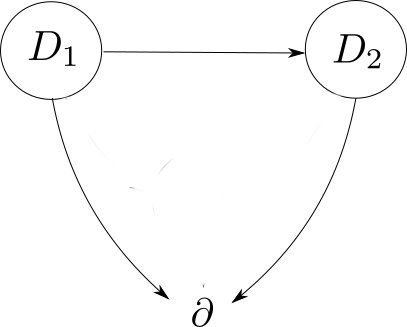}
	\caption{Transition graph displaying the relation between the sets $D_1$, $D_2$ and $\d$.}
	\label{fig:trangraph}
\end{figure}

A probability measure $\nu$ is said to be a \textit{quasi-stationary distribution} if, for all $t \geq 0$,
\begin{equation}\label{qsd}\P_\nu(X_t \in \cdot | X_t \ne \d) = \nu(\cdot).\end{equation}
It is well-known (see \cite[Proposition 1]{MeleardVillemonais2012}) that the notion of quasi-stationary distribution is equivalent to the one of \textit{quasi-limiting distribution}, defined as a probability measure $\nu$ such that there exist some initial distributions $\mu$ such that, for all measurable subset $A \subset D$, 
$$\lim_{t \to \infty} \P_\mu(X_t \in A | X_t \ne \d) = \nu(A).$$
It is also well-known that, to any quasi-stationary distribution $\nu$ is associated the so-called \emph{exponential
    absorption rate} $\lambda_0>0$ such that $\mathbb{P}_\nu(X_t\in\cdot,\ X_t\neq\d)=e^{-\lambda_0 t}\nu$.
We refer the reader to \cite{MeleardVillemonais2012,ColletMartinezEtAl2013,DoornPollett2013} for a general overview on the theory of quasi-stationarity,  and to~\cite{Pinsky1985,GongQianEtAl1988,CattiauxColletEtAl2009,CattiauxMeleard2010,KnoblochPartzsch2010,KolbSteinsaltz2012,LittinC.2012,DelMoralVillemonais2018,ChampagnatVillemonais2017a,ChampagnatCoulibaly-PasquierEtAl2018,HeningKolb2019,GuillinNectouxEtAl2020,LelievreRamilEtAl2021,Ramil2021} for the study of the quasi-stationary distribution of diffusion processes.

The aim of this section is to provide conditions on $X$ allowing to obtain the existence of a quasi-stationary distribution $\nu$ for the process $X$, as well as the so-called \textit{Malthusian behavior} (see \cite{BertoinWatson2020} for the terminology), that is to say the existence of a positive function $\eta$ on $D$ such that
$$\lim_{t \to \infty} e^{\lambda_0 t} \P_x(X_t \in \cdot, X_t \ne \d) = \eta(x) \nu(\cdot),$$
where $\lambda_0$ is the exponential absorption rate associated to $\nu$ (in particular, this convergence entails the convergence of
the conditional probability measure $\P_x(X_t \in \cdot | X_t \ne \d)$ towards $\nu$). In what follows, we will present three
different sets of assumptions, each discussed in three different subsections, and each entailing different Malthusian behavior for
the process $X$.

\subsection{Exponential convergence on $D_2$ and faster exit from $D_1$}
\label{assumption-QSD2}

Let us introduce our first set of assumptions. \medskip

\medskip\noindent\textbf{Assumption QSD2.} \begin{enumerate}[a.]\item There exist a positive function $\eta_2$ on $D_2$, a probability measure $\nu_2$ on $D_2$, a constant $\lambda_2 > 0$, and positive constants $C_2,\gamma_2>0$ such that
\begin{align}
\label{eq:eta2def}
\left\|e^{\lambda_2 t}\mathbb P_x(X_t\in \cdot, X_t \in D_2)-\eta_2(x)\nu_2(\cdot)\right\|_{TV}\leq C_2e^{-\gamma_2 t},\ \forall x\in D_2,\ t\geq 0.
\end{align}
\item In addition, $\sup_{x\in D_1}e^{\lambda_2 t}\mathbb P_x(X_t\in D_1)\leq f(t)$, where $f$ is non-increasing and $L^1$ on
  $\mathbb{R}_+$.
\end{enumerate}

The assumption a.\ refers to the Malthusian behavior, as described before, of the restriction of $X$ on the subset $D_2$, holding
uniformly in $x$ in total variation and exponentially fast. We refer the reader to \cite{DelMoralMiclo2002,DelMoral2004,ChampagnatVillemonais2017b,FerreRoussetEtAl2018,Velleret2018,BansayeCloezEtAl2019,ChampagnatVillemonais2019,GuillinNectouxEtAl2020,BenaimChampagnatEtAl2021,ChampagnatVillemonais2021}
 for general criteria entailing such behavior. 

Also, remark that the inequality \eqref{eq:eta2def} entails that $\eta_2$ is bounded
   (take for example $t=0$). In addition, it implies that $\eta_2(x)=\lim_{t\rightarrow+\infty} e^{\lambda_2
    t}\mathbb{P}_x(X_t\neq\d)$ for all $x\in D_2$. In addition, as noticed above, it implies that $\nu_2$ is a quasi-stationary
  distribution for $X$.

Then Assumption QSD2 entails the following result on quasi-stationarity. 

\begin{theorem}
\label{thm-qsd2}
Under QSD2
, there exists $\eta:D\to\mathbb R_+$ positive on $D_2$ such that
        \begin{align}
        \label{eq:thmcase1}
        \sup_{x\in D}\left\|e^{\lambda_2 t}\mathbb P_x(X_t\in \cdot, X_t \neq\d)-\eta(x)\nu_2(\cdot\cap D_2)\right\|_{TV}
       \xrightarrow[t\to+\infty]{}0.
        \end{align}
	 If in addition $\mathbb P_x(\exists n\geq 0,\,X_n\in D_2)>0$ for all $x\in D_1$, then $\eta$ is positive on $D$ and $\nu_2(\cdot\cap D_2)$ is the unique quasi-stationary distribution for $X$ on $D$.
\end{theorem}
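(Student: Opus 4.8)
The plan is to split the analysis according to whether the starting point lies in $D_2$ or in $D_1$, reducing the second case to Assumption QSD2(a) by restarting the process at the entrance time into the absorbing cluster $D_2$,
\[\tau:=\inf\{t\ge 0:\ X_t\in D_2\}.\]
If $x\in D_2$ the conclusion is immediate: since $D_2\cup\{\d\}$ is absorbing one has $\{X_t\neq\d\}=\{X_t\in D_2\}$ $\P_x$-a.s., so \eqref{eq:eta2def} already gives \eqref{eq:thmcase1} (with $\eta:=\eta_2$ on $D_2$) at the exponential rate $C_2 e^{-\gamma_2 t}$. For $x\in D_1$ I would decompose
\[\P_x(X_t\in\cdot,\,X_t\neq\d)=\P_x(X_t\in\cdot,\,X_t\in D_1)+\P_x(X_t\in\cdot,\,X_t\in D_2).\]
The first term has total mass $\P_x(X_t\in D_1)\le e^{-\lambda_2 t}f(t)$ by QSD2(b), so after multiplication by $e^{\lambda_2 t}$ it vanishes uniformly in $x$ (a non-increasing $L^1$ function tends to $0$); and since $\nu_2$ is carried by $D_2$, in total variation the two clusters decouple, leaving only the $D_2$-part to treat.

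For the $D_2$-part I would use the strong Markov property at $\tau$ (on $\{\tau\le t\}$ one has $X_\tau\in D_2$, and $\{X_t\in D_2\}\subseteq\{\tau\le t\}$), which gives
\[e^{\lambda_2 t}\P_x(X_t\in\cdot,\,X_t\in D_2)=\mathbb{E}_x\!\left[\mathbf 1_{\tau\le t}\,e^{\lambda_2\tau}\,\Psi(X_\tau,t-\tau)\right],\qquad \Psi(y,r):=e^{\lambda_2 r}\,\P_y(X_r\in\cdot,\,X_r\in D_2).\]
By QSD2(a), $\Psi(y,r)\to\eta_2(y)\nu_2$ in total variation at rate $C_2e^{-\gamma_2 r}$, with $\|\Psi(y,r)\|_{TV}\le\|\eta_2\|_\infty+C_2$. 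This dictates the definition $\eta(x):=\mathbb{E}_x[\mathbf 1_{\tau<\infty}e^{\lambda_2\tau}\eta_2(X_\tau)]$ for $x\in D_1$, and suggests passing to the limit by dominated convergence, the integrand being dominated by $(\|\eta_2\|_\infty+C_2)\,e^{\lambda_2\tau}\mathbf 1_{\tau<\infty}$.

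The crux, and the step I expect to be the main obstacle, is to control the exponential moment $\mathbb{E}_x[e^{\lambda_2\tau}\mathbf 1_{\tau<\infty}]$ uniformly in $x\in D_1$, since the weight $e^{\lambda_2\tau}$ competes with the absorption rate. The key observation linking $\tau$ to QSD2(b) is the inclusion $\{s<\tau<\infty\}\subseteq\{X_s\in D_1\}$ (a path that has not yet reached $D_2$ at time $s$ but does later can be neither at $\d$ nor in $D_2$ at time $s$), whence $\P_x(s<\tau<\infty)\le\P_x(X_s\in D_1)\le e^{-\lambda_2 s}f(s)$. An integration by parts then yields
\[\mathbb{E}_x\!\left[e^{\lambda_2\tau}\mathbf 1_{\tau<\infty}\right]=\P_x(\tau<\infty)+\lambda_2\!\int_0^\infty\! e^{\lambda_2 s}\,\P_x(s<\tau<\infty)\,ds\le 1+\lambda_2\|f\|_{L^1(\mathbb{R}_+)}<\infty,\]
uniformly in $x$, and more precisely the uniform tail bound $\sup_{x\in D_1}\mathbb{E}_x[e^{\lambda_2\tau}\mathbf 1_{T<\tau<\infty}]\le f(T)+\lambda_2\int_T^\infty f\to 0$. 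These estimates make $\eta$ finite and bounded, supply the domination needed for dominated convergence, and — splitting the time interval at $t/2$ to separate the regime where QSD2(a) is nearly sharp from the negligible late-entrance tail — upgrade the pointwise limit to convergence uniform in $x\in D$, proving \eqref{eq:thmcase1}.

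For the second statement, the accessibility hypothesis $\P_x(\exists n,\ X_n\in D_2)>0$ for $x\in D_1$ is exactly $\P_x(\tau<\infty)>0$, so $\eta(x)=\mathbb{E}_x[\mathbf 1_{\tau<\infty}e^{\lambda_2\tau}\eta_2(X_\tau)]>0$ (as $\eta_2>0$ on $D_2$); hence $\eta>0$ on all of $D$. For uniqueness I would normalise \eqref{eq:thmcase1} using $\eta>0$ to obtain $\P_x(X_t\in\cdot\mid X_t\neq\d)\to\nu_2(\cdot\cap D_2)$ for every $x\in D$, then integrate against an arbitrary initial law $\beta$ via the uniform bound $\sup_{x}e^{\lambda_2 t}\P_x(X_t\neq\d)\le M$ and dominated convergence, giving $\P_\beta(X_t\in\cdot\mid X_t\neq\d)\to\nu_2(\cdot\cap D_2)$ whenever $\int\eta\,d\beta\in(0,\infty)$ (which holds since $\eta$ is bounded and positive). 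Applying this to a putative quasi-stationary distribution $\beta$, whose conditional law is constant and equal to $\beta$ by \eqref{qsd}, forces $\beta=\nu_2(\cdot\cap D_2)$; together with the fact that $\nu_2(\cdot\cap D_2)$ is itself quasi-limiting, hence quasi-stationary, this establishes uniqueness.
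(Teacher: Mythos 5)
Your proof rests on the same two ideas as the paper's: decompose according to the first passage out of $D_1$ into $D_2$, define $\eta$ on $D_1$ as the exponentially weighted expectation of $\eta_2$ at the entrance point, and control the exponential moment of the entrance time by means of hypothesis QSD2(b). Your estimate $\mathbb{E}_x\bigl[e^{\lambda_2\tau}\mathbf 1_{\tau<\infty}\bigr]\le 1+\lambda_2\|f\|_{L^1}$, obtained from $\{s<\tau<\infty\}\subset\{X_s\in D_1\}$ and integration by parts, is exactly the continuous-time counterpart of the paper's Abel summation bound $\sum_{k\ge n+1}e^{\lambda_2 k}\mathbb P_x(X_{k-1}\in D_1, X_k\in D_2)\le(e^{\lambda_2}-1)\sum_{k\ge n}e^{\lambda_2 k}\mathbb P_x(X_k\in D_1)$; the tail bound, the splitting at $t/2$, and the uniqueness argument are all sound. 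The genuine difference is organisational, and it hides one caveat: you work directly in continuous time and apply the strong Markov property at $\tau=\inf\{t\ge 0: X_t\in D_2\}$, asserting $X_\tau\in D_2$ on $\{\tau<\infty\}$. For a general Markov process and a merely \emph{measurable} partition $D_1\cup D_2\cup\{\d\}$ (the setting of Section~\ref{sec:abstractQSD}), none of this is automatic: $\tau$ need not be a stopping time, the strong Markov property is not assumed, and even a continuous strong Markov process may first ``enter'' $D_2$ at a point of $\overline{D_2}\cap D_1$, where \eqref{eq:eta2def} gives no information. The paper sidesteps all three issues by first proving the theorem for the chain $(X_n)_{n\in\Z_+}$, where $\tau_1^c=\min\{n: X_n\notin D_1\}$ is trivially a stopping time and $X_{\tau_1^c}\in D_2\cup\{\d\}$ automatically, and then recovering the continuous-time statement by applying the discrete result to the test functions $y\mapsto\mathbb P_y(X_h\in\cdot,\,X_h\neq\d)$, $h\in[0,1]$, which also yields the eigenfunction identity $\mathbb{E}_x(\eta(X_h))=e^{-\lambda_2 h}\eta(x)$. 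To make your version match the stated generality, either add the regularity hypotheses you are implicitly using (right-continuous paths and filtration, strong Markov property, $D_2$ closed in $D$), or adopt the paper's discrete-then-interpolate scheme; your estimates transfer verbatim.
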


In other terms, the Malthusian behavior of $X$, only assumed for $x \in D_2$ in Assumption QSD2, holds for all $x \in D$ and uniformly in $x$ in total variation. Speed of convergence for \eqref{eq:thmcase1} is discussed after the proof of Theorem \ref{thm-qsd2}.

\begin{proof}[Proof of Theorem \ref{thm-qsd2}.]
For the rest, we first prove this theorem in the discrete time setting, and then consider the continuous-time setting.
\medskip \\
\textbf{Step 1: Proof in the discrete-time setting.}
    We define the stopping time $\tau_1^c:=\min\{n\geq 0,\ X_n\notin D_1\}$.
    For all  $x\in D$ and all measurable set $A\subset D_1\cup D_2$, for all $n\in\mathbb Z_+$, we have,  using the strong Markov property at time $\tau_1^c$,
    \begin{align}
    \mathbb P_x(X_n\in A)&=\mathbb P_x(X_n\in A\cap D_1)
    +\mathbb P_x(X_n\in A\cap D_2)\notag\\
    &=\mathbb P_x(X_n\in A\cap D_1)
    +\sum_{k=0}^{n} \mathbb E_x\left(1_{\tau_1^c=k}\,\mathbb P_{X_k}(X_{n-k}\in A\cap D_2)\right)\label{eq:decomp}.
    \end{align}
    
    If $x\in D_2$, then the result is an immediate consequence of~\eqref{eq:eta2def} with $\eta(x)=\eta_2(x)$. It only remains to consider the case $x\in D_1$.
    On the one hand, we have by assumption
    \begin{align}
    e^{\lambda_2 n}\mathbb P_x(X_n\in A\cap D_1)&\leq e^{\lambda_2 n} \mathbb P_x(X_n\in D_1)\xrightarrow[n\to+\infty]{} 0  .\label{eq:useME1step1}
    \end{align}
    On the other hand, using~\eqref{eq:eta2def} and extending $\eta_2$ to $\{\partial\}$ by $\eta_2(\partial)=0$, we obtain, for all $k\geq 0$, $t\geq 0$ and measurable set $A\subset D_2$,
    \begin{align*}
    & \Big|\mathbb E_x\left(1_{\tau_1^c=k}\,\mathbb P_{X_k}(X_{n-k}\in A)\right) -\mathbb E_x\left(1_{\tau_1^c=k}\,\eta_2(X_k)\nu_2(A) e^{-\lambda_2 (n-k)} \right)\Big|\\
    &\phantom{\mathbb E_\mu1_{\tau_1^c=k}\,\mathbb P_{X_k}(X_{n-k}}\leq \mathbb E_x\left(1_{\tau_1^c=k}\,C_2\, e^{-(\lambda_2+\gamma_2) (n-k)} 1_{X_k\in D_2}\right)\\
    &\phantom{\mathbb E_\mu1_{\tau_1^c=k}\,\mathbb P_{X_k}(X_{n-k}}\leq  
    \begin{cases}
    C_2\,\mathbb P_x\left(X_{k-1}\in D_1,\,X_k\in D_2\right)\, e^{-(\lambda_2+\gamma_2) (n-k)}&\text{ if }k\geq 1,\\
    0&\text{ if }k=0.
    \end{cases}
    \end{align*}
    Summing over $k$, we deduce that, for all $n\geq 0$ and all measurable set $A\subset D_2$,
    \begin{align}
    &\left|\sum_{k=0}^{n} \mathbb E_x\left(1_{\tau_1^c=k}\,\mathbb P_{X_k}(X_{n-k}\in A)\right)-\sum_{k=0}^{n} \mathbb E_x\left(1_{\tau_1^c=k}\eta_2(X_k)\nu_2(A)e^{-\lambda_2 (n-k)} \right)\right|\notag\\
    &\leq C_2e^{-\lambda_2 n}e^{-\gamma_2 n}\sum_{k=1}^n  e^{\lambda_2 k}e^{\gamma_2 k} \mathbb P_x\left(X_{k-1}\in D_1,\,X_k\in
      D_2\right) \notag\\ & \leq C_2e^{-\lambda_2 (n-1)}e^{-\gamma_2 n}\sum_{k=1}^n  e^{\gamma_2 k} f(k-1)\label{eq:gammabound}.
    \end{align}
   Moreover
    \begin{align*}
    \sum_{k\geq n+1} \mathbb E_x\left(1_{\tau_1^c=k}\eta_2(X_k)e^{\lambda_2 k} \right)
    &\leq \|\eta_2\|_\infty \sum_{k\geq n+1} e^{\lambda_2 k}\mathbb P_x(X_{k-1}\in D_1,\,X_{k}\in D_2) \\ & \leq \|\eta_2\|_\infty
    \sum_{k\geq n+1} e^{\lambda_2 k}\left(\mathbb P_x(X_{k-1}\in D_1)-\mathbb{P}_x(X_{k}\in D_1)\right) \\ & \leq \|\eta_2\|_\infty
    (e^{\lambda_2}-1)\sum_{k\geq n} e^{\lambda_2 k}\mathbb{P}_x(X_{k}\in D_1) \\ & \leq \|\eta_2\|_\infty
    (e^{\lambda_2}-1)\int_{n-1}^{+\infty}f(t)dt.
    \end{align*}

    This,~\eqref{eq:decomp},~\eqref{eq:useME1step1} and~\eqref{eq:gammabound} 
    entail that, for all $x\in D_1$ and all measurable $A\subset D$,
    \begin{align*}
    \sup_{x\in D_1} \left|e^{\lambda_2 n}\mathbb P_x(X_n\in A)-\sum_{k=0}^{+\infty} \mathbb E_x\left(1_{\tau_1^c=k}\eta_2(X_k)e^{\lambda_2 k} \right)\nu_2(A\cap D_2)\right|
    \xrightarrow[n\to+\infty]{} 0.
    \end{align*}
    Setting
      \[
       \eta(x):=\begin{cases}
       \sum_{k=0}^{+\infty} \mathbb E_x\left(1_{\tau_1^c=k}\eta_2(X_k)e^{\lambda_2 k} \right)&\text{ if }x\in D_1\\
        \eta_2(x)&\text{ if }x\in D_2,
       \end{cases}
      \]
 this concludes the proof of~\eqref{eq:thmcase1}.

 If in addition $\mathbb P_x(\exists n\geq 0,\ X_n\in D_2)>0$ for all $x \in D_1$, then $\eta$ is positive on $D$ and hence $\nu_2(\cdot\cap D_2)$ is the unique quasi-limiting distribution of the process and hence its unique quasi-stationary distribution.
\medskip \\
\textbf{Step 2 : Proof in the continuous-time setting.} The proof is done by applying the discrete time result to the Markov chain $(X_n)_{n\in\mathbb Z_+}$.

	Let $X$ satisfy Assumption~QSD2. Then the discrete time process $(X_n)_{n\in\mathbb N}$ also satisfies Assumption~QSD2 and hence, by Theorem~\ref{thm-qsd2} in the discrete time setting, we have
	\begin{align}
	\sup_{x\in D}\left\|e^{\lambda_2 n}\mathbb P_x(X_n\in \cdot,\,X_n\neq \partial)-\eta(x)\nu_2(\cdot\cap D_2)\right\|_{TV}
	\xrightarrow[n\to+\infty]{}0. \label{eq:pf-QSD2}
	\end{align}
	For any $x\in D$ and $h>0$, integrating the above convergence result with respect to $\mathbb P_x(X_h\in\cdot)$ entails that
	\begin{align*}
	\left\|e^{\lambda_2 n}\mathbb P_x(X_{n+h}\in\cdot, {X_{n+h}\neq \partial})-\mathbb E_x(\eta(X_h))\nu_2(\cdot\cap D_2)\right\|_{TV}
	\xrightarrow[n\to+\infty]{}0
	\end{align*}
	Similarly, for any $x\in D$ and $h>0$, applying~\eqref{eq:pf-QSD2} to the test function $y\in D\mapsto \mathbb P_y(X_h\in\cdot,\,X_h\neq \partial)$, gives
	\begin{align*}
	\left\|e^{\lambda_2 n}\mathbb P_x(X_{n+h}\in\cdot, {X_{n+h}\neq\partial})-\eta(x)\mathbb P_{\nu_2}(X_h\in\cdot,\,X_h\neq\partial)\right\|_{TV}
	\xrightarrow[n\to+\infty]{}0.
	\end{align*}
       But $D_2\cup\{\partial\}$ being absorbing, we have $\mathbb P_{\nu_2}(X_h\in\cdot,\,X_h\neq\partial)=\mathbb P_{\nu_2}(X_h\in\cdot,\,X_h\in D_2)$, which implies that
	\begin{align*}
	\mathbb E_x(\eta(X_h))\nu_2(\cdot\cap D_2)=\eta(x)\mathbb P_{\nu_2}(X_h\in\cdot\cap D_2)=e^{-\lambda_2 h} \eta(x)\nu_2(\cdot\cap D_2),
	\end{align*}
	so that $\mathbb E_x(\eta(X_h))=e^{-\lambda_2 h} \eta(x)$, and hence
	\begin{align*}
	\left\|e^{\lambda_2 n}\mathbb P_x(X_{n+h}\in\cdot, {X_{n+h}\neq\partial})-e^{-\lambda_2 h} \eta(x)\nu_2(\cdot\cap D_2)\right\|_{TV}
	\xrightarrow[n\to+\infty]{}0
	\end{align*}
	Since the convergence holds uniformly in $h\in[0,1]$, this concludes the proof of~\eqref{eq:thmcase1} in the continuous time setting. The uniqueness of the quasi-stationary distribution is immediate, since any quasi-stationary distribution for $(X_t)_{t\in\mathbb R_+}$ is also a quasi-stationary distribution for $(X_n)_{n\in\mathbb Z_+}$.
\end{proof}

Let us do some remarks before passing to the second set of assumptions.

\begin{remark}
\label{remark1}
	In the proof, the speed of convergence in the above theorem are explicit in terms of the quantities appearing in the assumptions. In particular, if Assumption~QSD2 holds true with 
	\[
	e^{(\lambda_2+\varepsilon) t}\sup_{x\in D_1}\mathbb P_x\left(X_t\in D_1\right)\xrightarrow[t\to+\infty]{} 0,
	\]
	 for some $\varepsilon>0$, then the convergence in~\eqref{eq:thmcase1} is exponential.
\end{remark}

\begin{remark}
\label{remark2}
	Non-uniform speed of convergence can also be proved under weaker form of Assumption~QSD2. For instance
        if~\eqref{eq:eta2def} holds true and if, for some $x\in D_1$, $e^{\lambda_2 t}\mathbb P_x(X_t\in D_1)\leq f(t)$ with $f$
        non-increasing and $L^1$ on $\mathbb{R}_+$
(but non-uniformly in $x\in D_1$), then,
	\[
	\left\|e^{\lambda_2 n}\mathbb P_x(X_n\in \cdot)-\eta(x)\nu_2(\cdot\cap D_2)\right\|_{TV}
	\xrightarrow[n\to+\infty]{}0.
	\]
\end{remark}

\begin{remark}
\label{remark3}
	The use of the absorbed Markov process setting is for convenience only: the above result applies more generally to semi-groups on $L_\infty(D)$ admitting an isolated simple leading eigenvalue $\lambda\in\mathbb C$.
\end{remark}

\subsection{Exponential convergence on $D_1$ and faster absorbtion in $D_2$}

Let us now state our second set of assumptions.

\medskip\noindent\textbf{Assumption QSD1.} There exist a positive function $\eta_1$ on $D_1$, a probability measure $\nu_1$ on $D_1$, a constant $\lambda_1 > 0$, and positive constants $C_1,\gamma_1>0$ such that
\begin{align}
   \label{eq:eta1def}
   \left\|e^{\lambda_1 t}\mathbb P_x(X_t\in \cdot, X_t \in D_1)-\eta_1(x)\nu_1(\cdot)\right\|_{TV}\leq C_1e^{-\gamma_1 t},\ \forall x\in D_1,\ t\geq 0.
\end{align}
In addition, for all $t\geq 0$,
\begin{equation}
    \label{finite-sum-QSD1}
    e^{\lambda_1 t} \sup_{x \in D_2} \P_x(X_t \in D_2)\leq f(t)
\end{equation}
where $f$ is non-decreasing and $L^1$ on $\mathbb{R}_+$.

Assumption QSD1 is very similar to Assumption QSD2, except that this assumption now deals with the quasi-stationarity for the
restriction of the process $X$ considered as absorbed by $D_2 \cup \{\d\}$. Similarly to Assumption QSD2, Assumption QSD1 entails
that $\eta_1$ is bounded, $\eta_1(x) = \lim_{t \to \infty} e^{\lambda_1 t} \P_x(X_t \in D_1)$ for all $x \in D_1$, and that $\nu_1$
is a quasi-stationary distribution for the process $X$ considered as absorbed by $D_2 \cup \{\d\}$. The condition
\eqref{finite-sum-QSD1} tells that $\sup_{x \in D_2} \P_x(X_t \ne \d)$ vanishes faster than the probability of survival starting from
the quasi-stationary distribution $\nu_1$.


\begin{theorem}
\label{thm-qsd1}
 Under Assumption~QSD1, there exists a positive finite measure $\nu$ on $D$ such that
        \begin{align}
        \sup_{x\in D} \left\|e^{\lambda_1 t}\mathbb P_x(X_t\in  \cdot, X_t \neq\d)-\eta(x)\nu \right\|_{TV}
        \xrightarrow[t\to+\infty]{} 0\label{eq:thmcase2},
        \end{align}
where $\eta(x)=\eta_1(x)$ for all $x\in D_1$ and $\eta(x)=0$ for all $x\in D_2$.
		In addition, $\nu/\nu(D)$ is the unique quasi-stationary distribution of $X$ such that $\nu(D_1)/\nu(D)>0$.
\end{theorem}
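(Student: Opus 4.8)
The plan is to follow the two-step scheme of the proof of Theorem~\ref{thm-qsd2}: first prove~\eqref{eq:thmcase2} for the discrete skeleton $(X_n)_{n\in\mathbb Z_+}$, then lift it to continuous time, and finally establish uniqueness. Throughout I extend $\eta_1$ by $0$ outside $D_1$ and set $\tau_1^c:=\min\{n\geq 0:\ X_n\notin D_1\}$, which, since $D_2\cup\{\partial\}$ is absorbing, is also the entrance time in $D_2\cup\{\partial\}$. For $x\in D_2$ the claim is immediate: $e^{\lambda_1 n}\mathbb P_x(X_n\in\cdot,X_n\neq\partial)=e^{\lambda_1 n}\mathbb P_x(X_n\in\cdot\cap D_2)$ has total variation at most $f(n)\to 0$ by~\eqref{finite-sum-QSD1}, uniformly in $x\in D_2$, matching $\eta(x)=0$. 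For $x\in D_1$ I split $\mathbb P_x(X_n\in A)=\mathbb P_x(X_n\in A\cap D_1)+\mathbb P_x(X_n\in A\cap D_2)$, the first term being governed directly by~\eqref{eq:eta1def}. For the second I condition on the exit data $\{\tau_1^c=k,\ X_k\in dy\}$ and write
\[
e^{\lambda_1 n}\mathbb P_x(X_n\in A\cap D_2)=\sum_{k=0}^{n}\int_{D_2}\left(e^{\lambda_1 k}\Phi^{(k)}_x(dy)\right) g_{n-k}(y,A),
\]
where $\Phi^{(k)}_x(dy):=\mathbb P_x(\tau_1^c=k,X_k\in dy)$ and $g_m(y,A):=e^{\lambda_1 m}\mathbb P_y(X_m\in A\cap D_2)$ obeys $0\leq g_m(y,A)\leq f(m)$ by~\eqref{finite-sum-QSD1}. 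The candidate limit is $\eta_1(x)\rho$, with $\rho(A):=\sum_{m\geq 0}\int_{D_2}\bar\phi(dy)\,g_m(y,A)$ and $\bar\phi(dy):=e^{\lambda_1}\mathbb P_{\nu_1}(X_1\in dy\cap D_2)$; here $\rho$ is a finite measure on $D_2$ since $\rho(D_2)\leq\bar\phi(D_2)\sum_m f(m)<\infty$. I then set $\nu:=\nu_1+\rho$ and $\eta:=\eta_1$ on $D_1$, $\eta:=0$ on $D_2$, so that $\nu$ is finite and positive with $\nu(D_1)=1>0$.

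The core step is a uniform total-variation control of the weighted exit flux. Writing $\kappa(z,dy):=\mathbb P_z(X_1\in dy\cap D_2)$ for the one-step sub-Markov kernel from $D_1$ into $D_2$, one has for $k\geq 1$
\[
e^{\lambda_1 k}\Phi^{(k)}_x(dy)=e^{\lambda_1}\int_{D_1}\left(e^{\lambda_1(k-1)}\mathbb P_x(X_{k-1}\in dz\cap D_1)\right)\kappa(z,dy),
\]
and the analogous identity with $\eta_1(x)\nu_1$ in place of $e^{\lambda_1(k-1)}\mathbb P_x(X_{k-1}\in\cdot\cap D_1)$ produces $\eta_1(x)\bar\phi$; since $\kappa(z,\cdot)\leq 1$, subtracting and using~\eqref{eq:eta1def} gives $\|e^{\lambda_1 k}\Phi^{(k)}_x-\eta_1(x)\bar\phi\|_{TV}\leq e^{\lambda_1}C_1 e^{-\gamma_1(k-1)}$ uniformly in $x\in D_1$. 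Subtracting $\eta_1(x)\rho(A)$ from the displayed sum and discarding the tail $m>n$ (bounded by $\|\eta_1\|_\infty\bar\phi(D_2)\sum_{m>n}f(m)\to 0$), the remainder is bounded, uniformly in $x\in D_1$ and in $A$, by the convolution $\sum_{m=0}^{n}C'e^{-\gamma_1(n-m)}f(m)$ of two summable sequences, which tends to $0$ (split the sum at $m=n/2$). This proves~\eqref{eq:thmcase2} for the skeleton. To pass to continuous time I argue as in Step~2 of the proof of Theorem~\ref{thm-qsd2}: inserting a lag $h\in[0,1]$ in two ways and comparing forces $\mathbb P_\nu(X_h\in\cdot,X_h\neq\partial)=c(h)\,\nu$ for some scalar $c(h)$, so that $\nu/\nu(D)$ is a quasi-stationary distribution; multiplicativity of $c$ and the skeleton rate give $c(h)=e^{-\lambda_1 h}$, hence $\mathbb E_x(\eta(X_h))=e^{-\lambda_1 h}\eta(x)$, and uniformity over $h\in[0,1]$ yields~\eqref{eq:thmcase2}. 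Unlike in Theorem~\ref{thm-qsd2}, the quasi-stationarity of the limit is not assumed but obtained here.

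For uniqueness, let $\pi$ be a quasi-stationary distribution of $X$ on $D$ with $\pi(D_1)>0$ and absorption rate $\lambda_\pi$, and put $\pi_i:=\pi(\cdot\cap D_i)$. Restricting the identity $\mathbb P_\pi(X_t\in\cdot,X_t\neq\partial)=e^{-\lambda_\pi t}\pi$ to $D_1$ and using that $D_2\cup\{\partial\}$ is absorbing gives $\mathbb P_{\pi_1}(X_t\in\cdot\cap D_1)=e^{-\lambda_\pi t}\pi_1$, so $\pi_1/\pi_1(D_1)$ is a quasi-stationary distribution of the process absorbed at $D_2\cup\{\partial\}$. Since~\eqref{eq:eta1def} makes $\nu_1$ the unique quasi-limiting, hence unique quasi-stationary, distribution of that absorbed process, we obtain $\pi_1=\pi(D_1)\,\nu_1$ and $\lambda_\pi=\lambda_1$. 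Restricting the same identity to $D_2$, multiplying by $e^{\lambda_1 t}$ and letting $t\to\infty$, the $\pi_2$-contribution vanishes by~\eqref{finite-sum-QSD1} while the $\pi(D_1)\nu_1$-contribution converges to $\pi(D_1)\rho$; as the left-hand side equals the constant $\pi_2(\cdot)$, this forces $\pi_2=\pi(D_1)\rho$. Hence $\pi=\pi(D_1)(\nu_1+\rho)=\pi(D_1)\,\nu$ and, by normalization, $\pi=\nu/\nu(D)$.

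The hard part will be the uniform (in $x\in D$ and over all measurable $A$) total-variation convergence of the $D_2$-leak, i.e.\ upgrading the pointwise limit $e^{\lambda_1 n}\mathbb P_x(X_n\in\cdot\cap D_2)\to\eta_1(x)\rho$ to a uniform one. The mechanism that makes it work is that the normalized exit flux inherits the exponential rate of~\eqref{eq:eta1def} after a single application of the sub-Markov kernel $\kappa$, reducing the estimate to the decay of a convolution of two $L^1$ sequences; identifying the correct limit measure $\rho$ and checking that $\nu=\nu_1+\rho$ is itself quasi-stationary are the other delicate points.
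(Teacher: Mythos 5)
Your argument for the convergence \eqref{eq:thmcase2} is correct and is essentially the paper's proof in different packaging: both decompose $\mathbb P_x(X_n\in A\cap D_2)$ at the first exit time $\tau_1^c$ from $D_1$, use \eqref{eq:eta1def} to replace the weighted exit flux by $\eta_1(x)$ times the flux out of $\nu_1$ (your kernel $\kappa$ plays the role of the paper's $f_A$), and bound the error by a convolution of $e^{-\gamma_1 k}$ with the $L^1$ function $f$, plus a tail term; your limit measure $\rho$ coincides with the paper's $\sum_k e^{\lambda_1(k+1)}\mathbb E_{\nu_1}(1_{\tau_1^c=1}1_{X_{k+1}\in\cdot\cap D_2})$. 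The one place you genuinely diverge is uniqueness: the paper simply integrates \eqref{eq:thmcase2} against a candidate QSD $\nu'$ with $\nu'(\eta_1)>0$ and reads off $\lambda'=\lambda_1$ and $\nu'=\nu/\nu(D)$ in one stroke, whereas you first identify $\pi_1$ with $\pi(D_1)\nu_1$ via uniqueness of the QSD of the process absorbed at $D_2\cup\{\partial\}$ and then recover $\pi_2=\pi(D_1)\rho$ by passing to the limit; both are valid (your step uses $\nu_1(\eta_1)=1$, which indeed follows from \eqref{eq:eta1def} applied at $\nu_1$), the paper's being shorter and yours more structural. You are also more explicit than the paper about the trivial case $x\in D_2$ and about why the limit $\nu$ is itself quasi-stationary in continuous time, which are welcome additions rather than deviations.
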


Hence, \eqref{eq:thmcase2} entails that $\P_x(X_t \in \cdot | X_t \ne \d)$ converges in total variation towards $\nu/\nu(D)$. Note also that Assumption QSD1 does not tell anything on the convergence of $\P_x(X_t \in \cdot | X_t \ne \d)$ when $x \in D_2$.  

\begin{remark}
\label{remark4}
Similarly as in Remark~\ref{remark1}, if 
\[
e^{(\lambda_1+\epsilon)t}\sup_{x\in D_2} P_x(X_k\in D_2)\to 0
\]
for some $\varepsilon>0$,
	 then the convergence in~\eqref{eq:thmcase2} is exponential.
\end{remark}

\begin{remark}
\label{remark5}
Similarly as in Remark~\ref{remark2}, 
	 if~\eqref{eq:eta1def} holds true and if 
\begin{equation*}
    \sum_{k=0}^\infty e^{\lambda_1 k} \sup_{x \in D_2} \P_x(X_k \in A) < + \infty.
\end{equation*}
 for some measurable $A\subset D$, then 
	\[
	\sup_{x\in D_1} \left|e^{\lambda_1 n}\mathbb P_x(X_n\in A)-\eta_1(x)\nu(A) \right|
	\xrightarrow[n\to+\infty]{} 0.
	\]
	This last case is particularly interesting, since it applies to situations where $\nu$ is not necessarily a finite measure (one may have $\nu(D_2)=+\infty$).
\end{remark}

\begin{proof}[Proof of Theorem \ref{thm-qsd1}.] We only prove the result in the discrete-time setting. The adaptation to the
  continuous-time setting follows from the same argument as in the proof of Theorem \ref{thm-qsd2}. 
 
    Fix $x\in D_1$.  The result is an immediate consequence of~\eqref{eq:eta1def} if $A\subset D_1$ with
      $\nu(\cdot\cap D_1)=\nu_1$. It remains to consider the case $A\subset D_2$, the general case being obtained by linearity.
    For all measurable $A\subset D_2$ and all $x\in D_1$, we have, for all $k\in\{0,\ldots,n-1\}$,
    \begin{align*}
    &\left|\mathbb E_x\left(1_{\tau_1^c=n-k}\mathbb P_{X_{n-k}}(X_{k}\in  A) \right)-\eta_1(x)e^{-\lambda_1 (n-k-1)}\mathbb E_{\nu_1}\left(1_{\tau_1^c=1}\,1_{X_{k+1}\in  A} \right)\right|\\
    &=\left|\mathbb E_x\left(f_A(X_{n-k-1})1_{n-k-1<\tau_1^c} \right)-\eta_1(x)e^{-\lambda_1 (n-k-1)} \nu_1(f_A) \right|
    \end{align*}
    where, for all $y\in D_1$, $f_A(y)=\mathbb P_y(X_1 \not \in D_1,\ X_{k+1}\in A)$, and where $\tau_1^c$ was defined in the proof of Theorem \ref{thm-qsd2}. By Markov's property, we have
    $f_A(y)\leq \sup_{z\in D_2} \mathbb P_z(X_{k}\in A)$.
    Hence, according to~\eqref{eq:eta1def}, we have
    \begin{align*}
     &\left|\mathbb E_x\left(1_{\tau_1^c=n-k}\mathbb P_{X_{n-k}}(X_{k}\in  A) \right)-\eta_1(x)e^{-\lambda_1 (n-k-1)}\mathbb E_{\nu_1}\left(1_{\tau_1^c=1}\,1_{X_{k+1}\in  A} \right)\right|\\
    &\leq C_1 e^{-\lambda_1 n}e^{-\gamma_1 (n-k-1)} \left(e^{\lambda_1 (k+1)}\sup_{z\in D_2} \mathbb P_z(X_{k}\in A)\right) \\ & \leq C_1 e^{-\lambda_1 n}e^{-\gamma_1 (n-k-1)}e^{\lambda_1}f(k).
    \end{align*}
    Summing over $k\in\{0,\ldots,n-1\}$ and multiplying by $e^{\lambda_1 n}$, we get
    \begin{align*}
    &\left|e^{\lambda_1 n}\mathbb P_x\left(X_n\in  A \right)-\eta_1(x)\sum_{k=0}^{n-1} e^{\lambda_1 (k+1)}\mathbb E_{\nu_1}\left(1_{\tau_1^c=1}\,1_{X_{k+1}\in  A} \right)\right|\\
&\leq C_1  e^{-\gamma_1 n} \sum_{k=0}^{n-1} e^{\gamma_1 (k+1)} e^{\lambda_1(k+1)}\sup_{z\in D_2} \mathbb P_z(X_{k}\in A) \\ & \leq C_1  e^{\lambda_1}\sum_{k=0}^{n-1}e^{-\gamma_1 (n-k-1)}f(k),
    \end{align*}
    where the right hand term goes to $0$ when $n\to+\infty$. 
	Finally, we observe that
	\begin{align*}
	\sum_{k=n}^{+\infty} e^{\lambda_1 (k+1)}\mathbb E_{\nu_1}\left(1_{\tau_1^c=1}\,1_{X_{k+1}\in  A} \right) & \leq
        \sum_{k=n}^{+\infty} e^{\lambda_1 (k+1)}\sup_{z\in D_2}\mathbb P_x(X_{k}\in  A) \\ & \leq e^{\lambda_1}\int_{n-1}^{+\infty}f(t)dt,
	\end{align*}
	which also goes to $0$ when $n\to+\infty$ under the assumption \eqref{finite-sum-QSD1}. This concludes the proof of~\eqref{eq:thmcase2} with
	\begin{align*}
	\nu(A):=\nu_2(A\cap D_1)+\sum_{k=0}^{+ \infty} e^{\lambda_1 (k+1)}\mathbb E_{\nu_1}\left(1_{\tau_1^c=1}\,1_{X_{k+1}\in  A\cap D_2} \right)
	\end{align*}
	In particular, $\nu/\nu(D)$ is a quasi-limiting distribution of $X$ and is thus a quasi-stationary distribution.
	
	To conclude, let $\nu'$ be a quasi-stationary distribution for $X$ such that $\nu'(D_1)>0$. Integrating~\eqref{eq:thmcase2}
        with respect to $\nu'$ and noting that $\nu'(\eta_1)>0$, we deduce that the exponential absorption rate of
          $\nu'$ is $\lambda_1$ and that $\nu/\nu(D)$ is a quasi-limiting distribution for $X$ starting from $\nu'$, and thus $\nu=\nu'$.
\end{proof}

\subsection{Exponential convergence in $D_1$ and $D_2$ with the same rate}

Let us now present our last set of assumptions.

\medskip\noindent\textbf{Assumption QSD1-2.} There exist two positive functions $\eta_1$ on $D_1$ and $\eta_2$ on $D_2$, two probability measures $\nu_1$ on $D_1$ and $\nu_2$ on $D_2$, a positive constant $\lambda_0 > 0$, and positive constants $C_1,\gamma_1,C_2,\gamma_2>0$ such that~\eqref{eq:eta1def} and~\eqref{eq:eta2def} hold true with $\lambda_1=\lambda_2=\lambda_0$. \medskip

In other terms, $\nu_2$ and $\nu_1$ are the quasi-stationary distributions for $X$ respectively started from $D_2$ and absorbed at
$\d$, and started from $D_1$ and absorbed at $D_2 \cup \{\d\}$, associated to the same absorption rate $\lambda_0 > 0$. Under this assumption, we have the following result.

\begin{theorem}
\label{thm-qsd1-2}
Under Assumption~QSD1-2, the process admits $\nu_2(\cdot\cap D_2)$ as unique quasi-stationary distribution and
        \begin{align}
        \label{eq:thmcase3}
        \sup_{x\in D_1}\left\| \frac{e^{\lambda_0 t}}{t}\mathbb P_x\left({X_t\in \cdot}, X_t \in D_2\right) 
        -\eta(x)\nu_2(\cdot\cap D_2)\right\|_{TV}\leq \frac{C}{t+1},
        \end{align}
        where $\eta$ is a positive function on $D_1$ and $C$ is a positive constant. 
\end{theorem}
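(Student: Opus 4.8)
This is the critical-case analysis (Assumption QSD1-2), where both clusters have the same absorption rate $\lambda_0$. The key difference from Theorems~\ref{thm-qsd2} and~\ref{thm-qsd1} is that the competing exponential decays no longer separate, so I expect the resonance between two equal rates to produce the linear-in-$t$ factor appearing in~\eqref{eq:thmcase3}.

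Let me think about this carefully.

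**The mechanism.** Starting from $x \in D_1$, the process spends some time in $D_1$, then at some point exits into $D_2 \cup \{\partial\}$, and if it lands in $D_2$ it continues there. The decomposition at the exit time $\tau_1^c$ is the natural tool (same as in the proof of Theorem~\ref{thm-qsd2}).

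$$\mathbb{P}_x(X_n \in A \cap D_2) = \sum_{k=1}^{n} \mathbb{E}_x\left(1_{\tau_1^c = k}\, \mathbb{P}_{X_k}(X_{n-k} \in A)\right)$$

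where $X_k \in D_2$ on the event $\tau_1^c = k$ (intersected with landing in $D_2$).

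Now I apply both assumptions:
- The inner term $\mathbb{P}_{X_k}(X_{n-k} \in A)$ for $X_k \in D_2$: by~\eqref{eq:eta2def}, this is approximately $e^{-\lambda_0(n-k)}\eta_2(X_k)\nu_2(A)$.
- The outer term involves $\mathbb{E}_x(1_{\tau_1^c = k}\, \eta_2(X_k))$, which is the probability (weighted) of exiting $D_1$ at time $k$ into $D_2$.

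The exit from $D_1$ behaves like $e^{-\lambda_0 k}$ times the $D_1$-quasi-stationary weight. So roughly:
$$e^{\lambda_0 n} \mathbb{P}_x(X_n \in A \cap D_2) \approx \sum_{k=1}^{n} \underbrace{e^{\lambda_0 k}\mathbb{E}_x(1_{\tau_1^c=k}\eta_2(X_k))}_{\approx c(x)} \cdot \nu_2(A)$$

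If each summand is approximately a constant $c(x)$ (independent of $k$, up to summable errors), then the sum is $\approx n \cdot c(x) \cdot \nu_2(A)$, giving the factor $n$ (or $t$).

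**The key quantity.** Let me define
$$a_k(x) := e^{\lambda_0 k}\mathbb{E}_x\left(1_{\tau_1^c = k}\, \eta_2(X_k)\, 1_{X_k \in D_2}\right).$$
I need to show $a_k(x) \to \eta(x)$ as $k \to \infty$ for some positive limit $\eta(x)$, and that the approximation errors are summable.

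The limit of $a_k(x)$: this is the renewal-theoretic content. Using~\eqref{eq:eta1def}, starting from $x \in D_1$, the conditional law of $X_{k-1}$ given survival in $D_1$ converges to $\nu_1$ at rate $e^{-\gamma_1 k}$, and the survival probability $\mathbb{P}_x(X_{k-1} \in D_1) \approx e^{-\lambda_0(k-1)}\eta_1(x)$. So
$$a_k(x) = e^{\lambda_0 k}\mathbb{E}_x\left(1_{X_{k-1} \in D_1}\, 1_{X_k \in D_2}\, \eta_2(X_k)\right) \to \eta_1(x) \cdot e^{\lambda_0}\, \nu_1\!\left(y \mapsto \mathbb{E}_y(1_{X_1 \in D_2}\eta_2(X_1))\right) =: \eta(x).$$
This $\eta(x)$ is positive on $D_1$ because $\eta_1 > 0$ and the transition $D_1 \to D_2$ has positive weighted mass under $\nu_1$ (otherwise $D_1$ would itself be absorbing and there would be no genuine two-cluster structure).

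**The plan.** First, establish the decomposition at $\tau_1^c$ and substitute the QSD2 asymptotics into the inner probability, controlling the error by~\eqref{eq:eta2def} (a geometric factor $e^{-\gamma_2(n-k)}$). Second, analyze $a_k(x)$ via QSD1: show $a_k(x) \to \eta(x)$ with $|a_k(x) - \eta(x)| \leq C' e^{-\gamma_1 k}$, uniformly over $x \in D_1$ (the uniformity comes from the uniform bound in~\eqref{eq:eta1def}). Third, assemble:
$$\frac{e^{\lambda_0 n}}{n}\mathbb{P}_x(X_n \in A \cap D_2) = \frac{1}{n}\sum_{k=1}^{n} a_k(x)\nu_2(A) + (\text{errors}),$$
and use Cesàro convergence: $\frac{1}{n}\sum_{k=1}^n a_k(x) \to \eta(x)$, with the two error sources — the QSD2 error $\sum_k e^{-\gamma_2(n-k)}$ and the QSD1 error $\sum_k e^{-\gamma_1 k}$ — both being $O(1/n)$ after division by $n$ (each is bounded by a convergent sum, hence $O(1)$, hence $O(1/n)$ after the $1/n$ factor). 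This yields the $C/(n+1)$ rate. Finally, the uniqueness of $\nu_2(\cdot \cap D_2)$ as the only QSD: any QSD $\nu'$ with $\nu'(D_1) > 0$ would, by integrating~\eqref{eq:thmcase3} against $\nu'$, have conditional law growing like $t \cdot e^{-\lambda_0 t}$ — incompatible with the pure exponential $e^{-\lambda_0 t}$ decay of a QSD; hence $\nu'(D_1) = 0$, forcing $\nu' = \nu_2(\cdot \cap D_2)$.

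**The main obstacle.** The hard part is controlling the double error uniformly and extracting the $1/(n+1)$ rate cleanly. Specifically, I must bound
$$\left|\frac{1}{n}\sum_{k=1}^n a_k(x) - \eta(x)\right|$$
sharply: the tail where $a_k \approx \eta(x)$ contributes a genuine $1/n$ from the initial transient $\sum_{k=1}^{K_0}|a_k - \eta|$, so the Cesàro rate is exactly $O(1/n)$ provided $\sum_k |a_k(x) - \eta(x)| < \infty$ uniformly in $x$ — which the geometric QSD1 bound $e^{-\gamma_1 k}$ supplies. I also need to be slightly careful that replacing $\sum_{k=1}^n$ by the full decomposition and bounding the inner QSD2 error over the convolution $\sum_{k=1}^n a_k e^{-\gamma_2(n-k)}\nu_2(A)$ contributes only $O(1)$ before dividing by $n$; this is a standard convolution-of-two-summable/bounded-sequences estimate, but tracking that the $\sup_{x \in D_1}$ survives all steps requires the uniformity built into both QSD1 and QSD2. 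Everything else is bookkeeping once these two rate estimates are in place.
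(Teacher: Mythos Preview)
Your proposal is correct and follows essentially the same route as the paper: decompose at the exit time $\tau_1^c$, apply~\eqref{eq:eta2def} to the $D_2$-evolution and~\eqref{eq:eta1def} to the $D_1$-sojourn, and observe that with equal rates the two errors are each summable so that the $n$-term sum behaves like $n\,\eta(x)\,\nu_2(A)$ up to an $O(1)$ remainder, yielding the $O(1/n)$ rate after division; your limiting function $\eta(x)=e^{\lambda_0}\eta_1(x)\,\mathbb E_{\nu_1}\!\big(1_{X_1\in D_2}\,\eta_2(X_1)\big)$ coincides with the paper's. The only organisational difference is that you apply QSD2 first (to produce the $a_k(x)$) and then QSD1 to show $a_k(x)\to\eta(x)$ geometrically, whereas the paper first uses QSD1 to replace the initial point by $\nu_1$ and then QSD2; the bounds are the same, and your uniqueness argument (a putative QSD charging $D_1$ would have $t\,e^{-\lambda_0 t}$ mass, contradicting pure exponential decay) is in fact more explicit than what the paper writes out.
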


In particular, Malthusian behavior \eqref{eq:eta2def} does not hold for $x \in D_1$. However, \eqref{eq:thmcase3} still entails that
the probability measure $\P_x(X_t \in \cdot | X_t \ne \d)$ converges in total variation towards $\nu_2$ for all $x \in D_1$ (and also for all $x \in D_2$ by Hypothesis~\eqref{eq:eta2def}). 

\medskip

	   \begin{proof}[Proof of Theorem \ref{thm-qsd1-2}.] As for the proof of Theorem \ref{thm-qsd2}, we only deal with the discrete-time setting.  

Fix $x\in D_1$ and
measurable set $A\subset D_2$. For all $k\geq 0$, we have, using~\eqref{eq:eta2def} (recall that $\lambda_1=\lambda_2=\lambda_0$),
\begin{align}
\label{eq:step31}
&\left|e^{\lambda_0 (k+1)}\mathbb P_{\nu_1}(\tau_1^c=1,\,X_{k+1}\in A)
-e^{\lambda_0}\mathbb E_{\nu_1}\left(1_{\tau_1^c=1}\eta_2(X_1)\right)\nu_2(A)\right|\leq e^{\lambda_0}C_2e^{-\gamma_2 k},
\end{align}
where $\tau_1^c$ was defined in the proof of Theorem \ref{thm-qsd2}. 
Moreover, for all $n\geq 1$ and $k\in\{0,\ldots,n-1\}$,
\begin{align*}
&\left|e^{\lambda_0 (k+1)}\eta_1(x)\mathbb P_{\nu_1}(\tau_1^c=1,\,X_{k+1}\in A)
-e^{\lambda_0 n}\mathbb E_x\left(1_{\tau_1^c=n-k}\,1_{X_n\in A}\right)\right|\\
&= e^{\lambda_0 (k+1)}\left|\eta_1(x)\nu_1(f_A)
-e^{\lambda_0 (n-k-1)}\mathbb E_x\left(f_A(X_{n-k-1})\right)\right|,
\end{align*}
where $f_A(y)=\mathbb P_y(\tau_1^c=1,\,X_{k+1}\in A)\leq \sup_{z\in D_2} \mathbb P_z(X_k\in D_2)$. Hence using~\eqref{eq:eta1def}, we deduce that
\begin{align*}
&\left|e^{\lambda_0 (k+1)}\eta_1(x)\mathbb P_{\nu_1}(\tau_c^1=1,\,X_{k+1}\in A)
-e^{\lambda_0 n}\mathbb E_x\left(1_{\tau_1^c=n-k}\,1_{X_n\in A}\right)\right|\\
&\leq e^{\lambda_0 (k+1)}C_1 e^{-\gamma_1(n-k-1)} \sup_{z\in D_2} \mathbb P_z(X_k\in D_2)\\
&=C_1 e^{\lambda_0}  e^{-\gamma_1(n-k-1)}  \sup_{z\in D_2} e^{\lambda_0 k}\mathbb P_z(X_k\in D_2),
\end{align*}
where, according to~\eqref{eq:eta2def}, $ \sup_{z\in D_2} e^{\lambda_0 k}\mathbb P_z(X_k\in D_2)$ is uniformly bounded in $k$ by $C_2+\|\eta_2\|_\infty$. This,~\eqref{eq:step31} and summing over $k\in\{0,\ldots,n-1\}$ imply that
\begin{align*}
&\left| e^{\lambda_0 n}\mathbb P_x\left({X_n\in A}\right) 
-ne^{\lambda_0}\eta_1(x)\mathbb E_{\nu_1}\left(1_{\tau_1^c=1}\eta_2(X_1)\right)\nu_2(A)\right|\\
&\leq C_2 \eta_1(x) e^{\lambda_0}\sum_{k=0}^\infty e^{-\gamma_2 k}+\sum_{k=0}^{+\infty} e^{-\gamma_1 k}C_1e^{\lambda_0}(C_2+\|\eta_2\|_\infty).
\end{align*}
This concludes the proof of~\eqref{eq:thmcase3} with $\eta(x) := e^{\lambda_0} \eta_1(x) \mathbb{E}_{\nu_1}\left(\mathbbm{1}_{\tau_1^c = 1} \eta_2(X_1)\right)$.
\end{proof}

\section{Proof of Theorem~\ref{thm:main}}
\label{sec:proofthm}

We start with a proposition related to the theory of quasi-stationary distributions for diffusion processes.

\begin{proposition}
	\label{prop:QSDs}
	There exist a positive function $\eta_2:[3,5)\to (0,+\infty)$, a probability measure $\nu_2$ on $[3,5)$ and positive constants $C_2,\lambda_2,\gamma_2>0$ such that, for all $\alpha>0$ and for all probability measure $\mu$ on $[3,5)$,
	\begin{align}
	\label{eq:QSDalpha}
	\left\|e^{\alpha \lambda_2 t}\mathbb P_\mu\left(X^\alpha_t\in\cdot\cap[3,5)\right)-\mu(\eta_2)\nu_2(\cdot)\right\|_{TV}\leq C_2\,e^{-\alpha\gamma_2 t},\ \forall t\geq 0.
	\end{align}
	There exist a positive function $\eta_1:(0,2)\to (0,+\infty)$,  a probability measure $\nu_1$ on $(0,2)$ and  positive constants $C_1,\lambda_1,\gamma_1>0$ such that, for all probability measure $\mu$ on $(0,2)$ and all $\alpha>0$,
	\begin{align}
	\label{eq:QSD0}
	\left\|e^{\lambda_1 t}\mathbb P_\mu\left(X^\alpha_t\in\cdot\cap (0,2)\right)-\mu(\eta_1)\nu_1(\cdot)\right\|_{TV}\leq C_1\,e^{-\gamma_1 t},\ \forall t\geq 0.
	\end{align}
\end{proposition}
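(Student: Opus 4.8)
The plan is to exploit the special structure of \eqref{eq:EDS}: on each of the two relevant sub-intervals the coefficients degenerate in a way that makes the dynamics either independent of $\alpha$ or a simple time-rescaling of a fixed diffusion, so that the proposition reduces to a single convergence-to-quasi-stationarity statement for two one-dimensional reference diffusions. First I would record the two reductions. On $(0,2)$ we have $\varphi^2\equiv 0$ and $\psi^2\equiv 0$ while $\psi^1\equiv 1$, so that as long as $X^\alpha$ remains in $(0,2)$ it solves the single equation $\mathrm dX_t=\varphi^1(X_t)\,\mathrm dB_t+\mathrm dt$, which does not involve $\alpha$. Since $0$ is absorbing and, by the remark following \eqref{eq:EDS}, the interval $[2,5]$ can never be left once reached, the exit of $(0,2)$ through $2$ is permanent; hence $\mathbb P_\mu(X^\alpha_t\in\cdot\cap(0,2))$ is exactly the sub-Markovian law of the diffusion absorbed at $\{0,2\}$, and in particular is independent of $\alpha$. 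This already forces $\eta_1,\nu_1,C_1,\lambda_1,\gamma_1$ to be $\alpha$-free, as stated in \eqref{eq:QSD0}. On $[3,5)$, instead, $\varphi^1\equiv 0$, $\psi^1\equiv 0$ and $\psi^2\equiv 1$, so $X^\alpha$ obeys $\mathrm dX_t=\sqrt\alpha\,\varphi^2(X_t)\,\mathrm dB_t+\alpha\,\mathrm dt$; since $[3,5]$ is absorbing, the only exit from $[3,5)$ is absorption at $5$.

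Next I would obtain the convergence for the two reference diffusions (choosing $\alpha=1$ in the second case). For this I would invoke a quasi-stationary convergence theorem for absorbed one-dimensional diffusions---either the classical one-dimensional analysis as in \cite{CattiauxColletEtAl2009} or the general criterion for diffusions satisfying the weak H\"ormander condition developed in \cite{BenaimChampagnatEtAl2021,ChampagnatVillemonais2017b}. Applied to the $(0,2)$-diffusion this produces $\lambda_1>0$, a probability measure $\nu_1$ on $(0,2)$ and a positive function $\eta_1$ with the uniform-in-$x$ bound $\sup_{x\in(0,2)}\|e^{\lambda_1 t}\mathbb P_x(X_t\in\cdot\cap(0,2))-\eta_1(x)\nu_1\|_{TV}\le C_1e^{-\gamma_1 t}$; applied to the $\alpha=1$ diffusion on $[3,5)$ it produces $\lambda_2>0$, $\nu_2$ on $[3,5)$ and $\eta_2>0$ with $\sup_{x\in[3,5)}\|e^{\lambda_2 t}\mathbb P_x(X^1_t\in\cdot\cap[3,5))-\eta_2(x)\nu_2\|_{TV}\le C_2e^{-\gamma_2 t}$. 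Integrating each uniform bound against an arbitrary initial law $\mu$ replaces $\eta_i(x)$ by $\mu(\eta_i)$; the first estimate is exactly \eqref{eq:QSD0}.

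To pass from $\alpha=1$ to general $\alpha$ on $[3,5)$ I would use Brownian scaling. If $X^1$ solves $\mathrm dX^1_s=\varphi^2(X^1_s)\,\mathrm dW_s+\mathrm ds$ started at $x\in[3,5)$, then $Y_t:=X^1_{\alpha t}$ satisfies $\mathrm dY_t=\sqrt\alpha\,\varphi^2(Y_t)\,\mathrm d\hat B_t+\alpha\,\mathrm dt$ for a standard Brownian motion $\hat B$, because the time change multiplies both the quadratic variation and the drift by $\alpha$ and $W_{\alpha\cdot}\stackrel{d}{=}\sqrt\alpha\,\hat B$. The coefficients being Lipschitz, weak uniqueness gives $\mathbb P_x(X^\alpha_t\in\cdot\cap[3,5))=\mathbb P_x(X^1_{\alpha t}\in\cdot\cap[3,5))$, with the absorption at $5$ transported correctly under the time change. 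Evaluating the $\alpha=1$ estimate at time $\alpha t$ then turns $\lambda_2,\gamma_2$ into $\alpha\lambda_2,\alpha\gamma_2$ while keeping $\eta_2,\nu_2,C_2$ fixed, and integrating against $\mu$ yields \eqref{eq:QSDalpha}.

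The one non-routine point, which I expect to be the main obstacle, is the verification of the hypotheses of the quasi-stationary convergence theorem at the two degenerate endpoints. At $x=2$ and $x=3$ the diffusion coefficient vanishes, so neither domain is uniformly elliptic up to its boundary; one has to check that $2$ (resp. $5$) is an accessible exit boundary for the $(0,2)$- (resp. $[3,5)$-) diffusion, that $3$ acts as an entrance boundary (positive drift, vanishing diffusion) so that $[3,5]$ is genuinely absorbing and the killed process is irreducible on $(3,5)$, and that the accessibility and Lyapunov-type conditions needed to upgrade the Yaglom limit to the uniform total-variation estimates above are satisfied. These verifications rest on the weak H\"ormander condition---the nonvanishing drift restoring hypoellipticity where $\varphi^1$ or $\varphi^2$ degenerates---together with the boundary-regularity framework of \cite{BenaimChampagnatEtAl2021}.
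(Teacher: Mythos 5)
Your proposal is correct and follows essentially the same route as the paper: reduce \eqref{eq:QSD0} to an $\alpha$-independent diffusion on $(0,2)$ absorbed at $\{0,2\}$, obtain \eqref{eq:QSDalpha} from the $\alpha=1$ case via the time-change identity $X^\alpha_t\stackrel{d}{=}X^1_{\alpha t}$ on $[3,5)$ (which rescales $\lambda_2,\gamma_2$ into $\alpha\lambda_2,\alpha\gamma_2$), and invoke a known uniform-in-$x$ exponential convergence result for one-dimensional absorbed diffusions (the paper cites Section 4.5 of \cite{ChampagnatVillemonais2017a} for this, where your flagged boundary-degeneracy issues are handled). The only difference is presentational: you make explicit the verification at the degenerate points $2$ and $3$, which the paper delegates entirely to the cited reference.
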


\begin{proof}[Proof of Proposition~\ref{prop:QSDs}]
	For any $\alpha>0$, on the event $X^\alpha_0\in[3,5)$, the process remains almost surely in $[3,5)$ until it reaches
        $\partial D$ at the end point $5$. It is known (this can be proved for instance using
        Section 4.5 of~\cite{ChampagnatVillemonais2017a}) that, considering the process $X^\alpha$ restricted to $[3,5)$ absorbed when it reaches $5$, there exists  a probability measure $\mu_\alpha$ on $[3,5)$ (the quasi-stationary distribution of $X^\alpha$ restricted to $[3,5)$), a positive function $\zeta_\alpha:[3,5)\to (0,+\infty)$, and positive constants $c_\alpha,\delta_\alpha,\delta'_\alpha>0$ such that, for all probability measure $\mu$ on $[3,5)$,
	\begin{align}
	\label{eq:QSDalphaproof}
	\left\|e^{\delta_\alpha t}\mathbb P_\mu\left(X^\alpha_t\in\cdot\right)-\mu(\zeta_\alpha)\mu_\alpha(\cdot)\right\|_{TV}\leq c_\alpha\,e^{-\delta'_\alpha t},\ \forall t\geq 0.
	\end{align}
	Also, since $Law((X_t^\alpha)_{t\geq 0})=Law((X^1_{\alpha t})_{t\geq 0})$, we deduce that $\zeta_\alpha=\zeta_1$, $\mu_\alpha=\mu_1$ and $\delta_\alpha=\alpha\delta_1$ for all $\alpha>0$. Moreover, on can take $c_\alpha=c_1$ and $\delta'_\alpha=\alpha\delta'_1$. Setting $\eta_2=\zeta_2$, $\nu_2=\mu_1$, $C_2=c_1$, $\lambda_2=\delta_1$ and $\gamma_2=\delta'_1$, this proves~\eqref{eq:QSDalpha}.

	Similarly, the law of the process $X^\alpha$ restricted to $(0,2)$ and absorbed when it reaches $\{0,2\}$ does not depend on $\alpha$, and there exists  a probability measure $\nu_1$ on $(0,2)$ (the quasi-stationary distribution of the process $X^\alpha$ conditioned to remain in $(0,2)$),  a positive function $\eta_1:(0,2)\to (0,+\infty)$ and  positive constants $C_1,\lambda_1,\gamma_1>0$ such that, for all probability measure $\mu$ on $(0,2)$ and all $\alpha>0$,
	\begin{align}
	\label{eq:QSD0proof}
	\left\|e^{\lambda_1 t}\mathbb P_\mu\left(X^\alpha_t\in\cdot,\,t<T_0\wedge T_2\right)-\mu(\eta_1)\mu_1(\cdot)\right\|_{TV}\leq C_1\,e^{-\gamma_1 t},\ \forall t\geq 0,
	\end{align}
	where $T_a$ denotes the first hitting time of $\{a\}$ (see again Section~4.5 in~\cite{ChampagnatVillemonais2017a}). Since the process cannot enter $(0,2)$ after time $T_0\wedge T_2$, we deduce that $\mathbb P_\mu\left(X^\alpha_t\in\cdot,\,t<T_0\wedge T_2\right)=\mathbb P_\mu\left(X^\alpha_t\in\cdot\cap (0,2)\right)$.
	This concludes the proof of Proposition~\ref{prop:QSDs}.
\end{proof}

We consider now the behaviour of the process with initial position in $[2,3)$. For any $\alpha > 0$ and $x\in [2,3)$, denote by $(f^\alpha_t(x))_{t\geq 0}$ the solution of the ODE $\partial f^\alpha_t(x)/\partial t = \psi^1(f^\alpha_t(x)) + \alpha \psi^2(f^\alpha_t(x))$ and $f^\alpha_0(x)=x$. For all $x\in [2,3)$, denote by $t_3(x)=\inf\{ t\geq 0,\ f^\alpha_t(x)=3\}$, then
\begin{align*}
\mathbb P_x(X^\alpha_{t}\in  A)=
\begin{cases}
\mathbf1_{f^\alpha_t(x)\in  A}&\text{ if }f^\alpha_t(x) < 3,\\
\mathbb P_{3}\left(X^\alpha_{t-t_3(x)}\in  A\right)&\text{ if }f^\alpha_t(x)\geq 3,
\end{cases}
\end{align*}
where we used the strong Markov property at time $T_3$  and the fact that $X^\alpha$ is deterministic with drift equal to $\psi^1 + \alpha \psi^2$ on $[2,3]$ (so that $T_3=t_3(x)$ $\P_x$-almost surely). In particular, we deduce from Proposition~\ref{prop:QSDs} that, for all $x\in [2,3)$ and $t\geq t_3(x)$, 
\begin{align*}
\left\|e^{\alpha \lambda_2 (t-t_3(x))}\mathbb P_x\left(X^\alpha_t\in\cdot\right)-\eta_2(3)\nu_2(\cdot)\right\|_{TV}\leq C_2\,e^{-\alpha\gamma_2 (t-t_3(x))}
\end{align*}
and hence
\begin{align*}
\left\|e^{\alpha \lambda_2 t}\mathbb P_x\left(X^\alpha_t\in\cdot\right)-\eta_\alpha(x)\nu_2(\cdot)\right\|_{TV}\leq e^{\alpha(\lambda_2+\gamma_2)t_3(x)}C_2\,e^{-\alpha\gamma_2 t}\leq C'_2 \,e^{-\alpha\gamma_2 t},
\end{align*}
where  $\eta_\alpha(x):=e^{\alpha\lambda_2 t_3(x)}{\eta_2(3)}$ and $C'_2=e^{\alpha(\lambda_2+\gamma_2)t_3(2)}C_2$. By integration of the last inequality and by~\eqref{eq:QSDalpha}, we thus proved that, for any initial distribution in $[2,5)$,
\begin{align}
\label{eq:QSDalphabis}
\left\|e^{\alpha \lambda_2 t}\mathbb P_\mu\left(X^\alpha_t\in\cdot\right)-\mu(\eta_\alpha)\nu_2(\cdot)\right\|_{TV}\leq C'_2\,e^{-\alpha\gamma_2 t},\ \forall t\geq 0,
\end{align}
where
\[
\eta_\alpha(x)=\begin{cases}
e^{\alpha \lambda_2 t_3(x)}{\eta_2(3)}&\text{ if }x\in[2,3]\\
\eta_2(x)&\text{ if }x\in[3,5).
\end{cases}
\]

We are now in a position to apply the results of Section~\ref{sec:abstractQSD}, with $D_1=(0,2)$ and $D_2=[2,5)$.

\paragraph{Case $\alpha<\lambda_1/\lambda_2$.} In this case we observe that Assumption~QSD2 is satisfied (of course with $\alpha\lambda_2$ instead of $\lambda_2$). Indeed, on the one hand~\eqref{eq:eta2def} is immediate from~\eqref{eq:QSDalphabis}, while, for all $x\in D_1$,
\begin{align*}
\sup_{x\in D_1}e^{\alpha\lambda_2 t}\mathbb P_x(X_t\in D_1)\leq e^{(\alpha\lambda_2-\lambda_1 )t}\left(C_1+\|\eta_1\|_\infty\right)\xrightarrow[t\to+\infty]{} 0
\end{align*}
and is $L^1(\mathbb{R}_+)$, where we used~\eqref{eq:QSD0}.
Moreover, we have $\mathbb P_x(X_1\in D_2)>0$ for all $x\in D_1$ and hence, according to Theorem~\ref{thm-qsd2}, $\nu_2(\cdot\cap D_2)$ is the unique quasi-stationary distribution $\nu$ for the process $X^\alpha$ absorbed at $\{0,5\}$, and~\eqref{eq:thmcase1} implies that, for all probability measure $\mu$ on $D$,
\begin{align*}
\Phi^\alpha_t(\mu)\xrightarrow[t\to+\infty]{TV} \nu_2(\cdot \cap D_2).
\end{align*}

\paragraph{Case $\alpha=\lambda_1/\lambda_2$.} In this case, we observe that
Assumption~QSD1-2 is satisfied. To deduce the convergence of $\Phi_t^\alpha(\mu)$, we need to distinguish two cases:
  either $\mu(D_1)=0$ and then the fact that
\begin{align*}
\Phi^\alpha_t(\mu)\xrightarrow[t\to+\infty]{TV} \nu_2(\cdot \cap D_2).
\end{align*}
follows from~\eqref{eq:eta2def}, or $\mu(D_1)>0$ and then it follows from~\eqref{eq:eta1def}, \eqref{eq:eta2def} and~\eqref{eq:thmcase3} that, on the one
hand,
\[
\frac{e^{\lambda_0 t}}{t}\mathbb{P}_\mu(X_t\neq\d)=\frac{e^{\lambda_0 t}}{t}\mathbb{P}_\mu(X_t\in D_1)+\frac{e^{\lambda_0
    t}}{t}\mathbb{P}_\mu(X_t\in D_2, X_t\neq\d)\xrightarrow[t\to+\infty]{}\mu(\eta)>0
\]
and on the other hand, that for all $A\subset D_1\cup D_2$ measurable,
\[
\frac{e^{\lambda_0 t}}{t}\mathbb{P}_\mu(X_t\in A)=\frac{e^{\lambda_0 t}}{t}\mathbb{P}_\mu(X_t\in A\cap D_1)+\frac{e^{\lambda_0
    t}}{t}\mathbb{P}_\mu(X_t\in A\cap D_2)\xrightarrow[t\to+\infty]{}\mu(\eta)\nu_2(A\cap D_2),
\]
where the convergence is uniform with respect to $A$. Hence, in all cases,
\begin{align*}
\Phi^\alpha_t(\mu)\xrightarrow[t\to+\infty]{TV} \nu_2(\cdot \cap D_2).
\end{align*}

\paragraph{Case $\alpha>\lambda_1/\lambda_2$.} In this case, we observe that Assumption~QSD1 is satisfied. Indeed, on the one hand~\eqref{eq:eta1def} holds true, while, for all $x\in D_2$, 
\begin{align*}
\sum_{k \in \Z_+}\sup_{x\in D_2}e^{\lambda_1 k}\mathbb P_x(X_k\in D_2)\leq \sum_{k \in \Z_+} e^{(\lambda_1-\alpha\lambda_2) k}\left(C'_2+\|\eta_\alpha\|_\infty \right) < + \infty,
\end{align*}
where we used~\eqref{eq:QSDalphabis} and $\alpha\lambda_2>\lambda_1$. Then,~\eqref{eq:thmcase2} in Theorem~\ref{thm-qsd1} implies that there exists a probability measure $\nu$ on $D$ such that $\nu(D_1)>0$ and such that, for all probability measure $\mu$ on $D$ such that $\mu(D_1)>0$,
\begin{align*}
\Phi^\alpha_t(\mu)\xrightarrow[t\to+\infty]{TV} \nu.
\end{align*}
In addition,~\eqref{eq:QSDalphabis} entails that, for all probability measure $\mu$ on $D$ such that $\mu(D_1)=0$,
\begin{align*}
\Phi^\alpha_t(\mu)\xrightarrow[t\to+\infty]{TV} \nu_2(\cdot\cap D_2).
\end{align*}
This concludes the proof of Theorem~\ref{thm:main}.

\bibliography{biblio}

\providecommand{\bysame}{\leavevmode\hbox to3em{\hrulefill}\thinspace}
\providecommand{\MR}{\relax\ifhmode\unskip\space\fi MR }
\providecommand{\MRhref}[2]{%
  \href{http://www.ams.org/mathscinet-getitem?mr=#1}{#2}
}
\providecommand{\href}[2]{#2}
\begin{thebibliography}{10}

\bibitem{BansayeCloezEtAl2019}
Vincent Bansaye, Bertrand Cloez, Pierre Gabriel, and Aline Marguet, \emph{A
  non-conservative harris' ergodic theorem}, arXiv preprint arXiv:1903.03946
  (2019).

\bibitem{BenaimChampagnatEtAl2021}
Michel {Bena{\"\i}m}, Nicolas {Champagnat}, William {O{\c{c}}afrain}, and Denis
  {Villemonais}, \emph{{Degenerate processes killed at the boundary of a
  domain}}, arXiv e-prints (2021), arXiv:2103.08534 V2.

\bibitem{BenaimCloezEtAl2016}
Michel Bena\"{i}m, Bertrand Cloez, and Fabien Panloup, \emph{Stochastic
  approximation of quasi-stationary distributions on compact spaces and
  applications}, The Annals of Applied Probability \textbf{28} (2018), no.~4,
  2370--2416.

\bibitem{BertoinWatson2020}
Jean Bertoin and Alexander~R Watson, \emph{The strong malthusian behavior of
  growth-fragmentation processes}, Annales Henri Lebesgue \textbf{3} (2020),
  795--823.

\bibitem{CattiauxColletEtAl2009}
Patrick Cattiaux, Pierre Collet, Amaury Lambert, Servet Mart{\'{\i}}nez, Sylvie
  M{\'e}l{\'e}ard, and Jaime San~Mart{\'{\i}}n, \emph{Quasi-stationary
  distributions and diffusion models in population dynamics}, Ann. Probab.
  \textbf{37} (2009), no.~5, 1926--1969. \MR{2561437 (2011b:60314)}

\bibitem{CattiauxMeleard2010}
Patrick Cattiaux and Sylvie M{\'e}l{\'e}ard, \emph{Competitive or weak
  cooperative stochastic {L}otka-{V}olterra systems conditioned to
  non-extinction}, J. Math. Biol. \textbf{60} (2010), no.~6, 797--829.
  \MR{2606515 (2011a:92067)}

\bibitem{ChampagnatCoulibaly-PasquierEtAl2018}
Nicolas Champagnat, Kol{\'e}h{\`e}~Abdoulaye Coulibaly-Pasquier, and Denis
  Villemonais, \emph{Criteria for exponential convergence to quasi-stationary
  distributions and applications to multi-dimensional diffusions},
  pp.~165--182, Springer International Publishing, Cham, 2018.

\bibitem{ChampagnatVillemonais2017a}
Nicolas Champagnat and Denis Villemonais, \emph{{General criteria for the study
  of quasi-stationarity}}, arXiv e-prints (2017), arXiv:1712.08092.

\bibitem{ChampagnatVillemonais2017b}
\bysame, \emph{Uniform convergence to the ${Q}$-process}, Electron. Commun.
  Probab. \textbf{22} (2017), 7 pp.

\bibitem{ChampagnatVillemonais2019}
\bysame, \emph{Practical criteria for r-positive recurrence of unbounded
  semigroups}, arXiv preprint arXiv:1904.08619 (2019).

\bibitem{ChampagnatVillemonais2021}
\bysame, \emph{Lyapunov criteria for uniform convergence of conditional
  distributions of absorbed markov processes}, Stochastic Processes and their
  Applications \textbf{135} (2021), 51--74.

\bibitem{ColletMartinezEtAl2013}
Pierre Collet, Servet Mart\'inez, and Jaime San~Mart\'in,
  \emph{Quasi-stationary distributions}, Probability and its Applications (New
  York), Springer, Heidelberg, 2013, Markov chains, diffusions and dynamical
  systems. \MR{2986807}

\bibitem{DelMoral2004}
Pierre Del~Moral, \emph{Feynman-{K}ac formulae}, Probability and its
  Applications (New York), Springer-Verlag, New York, 2004, Genealogical and
  interacting particle systems with applications. \MR{2044973}

\bibitem{DelMoralMiclo2002}
Pierre Del~Moral and Laurent Miclo, \emph{On the stability of nonlinear
  {F}eynman-{K}ac semigroups}, Ann. Fac. Sci. Toulouse Math. (6) \textbf{11}
  (2002), no.~2, 135--175. \MR{1988460 (2004d:47081)}

\bibitem{DelMoralVillemonais2018}
Pierre Del~Moral and Denis Villemonais, \emph{Exponential mixing properties for
  time inhomogeneous diffusion processes with killing}, Bernoulli \textbf{24}
  (2018), no.~2, 1010--1032.

\bibitem{FerreRoussetEtAl2018}
Gr{\'e}goire {Ferr{\'e}}, Mathias {Rousset}, and Gabriel {Stoltz}, \emph{{More
  on the long time stability of Feynman-Kac semigroups}}, arXiv e-prints
  (2018), arXiv:1807.00390.

\bibitem{GongQianEtAl1988}
Guang~Lu Gong, Min~Ping Qian, and Zhong~Xin Zhao, \emph{Killed diffusions and
  their conditioning}, Probab. Theory Related Fields \textbf{80} (1988), no.~1,
  151--167. \MR{MR970476 (90c:60051)}

\bibitem{GuillinNectouxEtAl2020}
Arnaud Guillin, Boris Nectoux, and Liming Wu, \emph{{Quasi-stationary
  distribution for strongly Feller Markov processes by Lyapunov functions and
  applications to hypoelliptic Hamiltonian systems}}, working paper or
  preprint, December 2020.

\bibitem{HeningKolb2019}
Alexandru Hening and Martin Kolb, \emph{Quasistationary distributions for
  one-dimensional diffusions with singular boundary points}, Stochastic
  Processes and their Applications \textbf{129} (2019), no.~5, 1659 -- 1696.

\bibitem{KnoblochPartzsch2010}
Robert Knobloch and Lothar Partzsch, \emph{Uniform conditional ergodicity and
  intrinsic ultracontractivity}, Potential Anal. \textbf{33} (2010), 107--136.

\bibitem{KolbSteinsaltz2012}
Martin Kolb and David Steinsaltz, \emph{Quasilimiting behavior for
  one-dimensional diffusions with killing}, Ann. Probab. \textbf{40} (2012),
  no.~1, 162--212. \MR{2917771}

\bibitem{LelievreRamilEtAl2021}
Tony Leli{\`e}vre, Mouad Ramil, and Julien Reygner, \emph{Quasi-stationary
  distribution for the langevin process in cylindrical domains, part i:
  existence, uniqueness and long-time convergence}, arXiv preprint
  arXiv:2101.11999 (2021).

\bibitem{LittinC.2012}
Jorge Littin~C., \emph{Uniqueness of quasistationary distributions and discrete
  spectra when {$\infty$} is an entrance boundary and 0 is singular}, J. Appl.
  Probab. \textbf{49} (2012), no.~3, 719--730. \MR{3012095}

\bibitem{MeleardVillemonais2012}
Sylvie M{\'e}l{\'e}ard and Denis Villemonais, \emph{Quasi-stationary
  distributions and population processes}, Probab. Surv. \textbf{9} (2012),
  340--410. \MR{2994898}

\bibitem{Pinsky1985}
Ross~G. Pinsky, \emph{On the convergence of diffusion processes conditioned to
  remain in a bounded region for large time to limiting positive recurrent
  diffusion processes}, Ann. Probab. \textbf{13} (1985), no.~2, 363--378.
  \MR{MR781410 (86i:60201)}

\bibitem{Ramil2021}
Mouad Ramil, \emph{Quasi-stationary distribution for the langevin process in
  cylindrical domains, part ii: overdamped limit}, arXiv preprint
  arXiv:2103.00338 (2021).

\bibitem{DoornPollett2013}
Erik~A. van Doorn and Philip~K. Pollett, \emph{Quasi-stationary distributions
  for discrete-state models}, European J. Oper. Res. \textbf{230} (2013),
  no.~1, 1--14. \MR{3063313}

\bibitem{Velleret2018}
Aur{\'e}lien {Velleret}, \emph{{Unique Quasi-Stationary Distribution, with a
  possibly stabilizing extinction}}, arXiv e-prints (2018), arXiv:1802.02409.

\end{thebibliography}
\bibliographystyle{amsplain}

\end{document}